\newcommand{\delete}[1]{}
\newtheorem{theorem}{Theorem}[section]
\newtheorem{definition}[theorem]{Definition}
\newtheorem{lemma}[theorem]{Lemma}
\newtheorem{prop-def}[theorem]{Proposition-Definition}
\newtheorem{remark}[theorem]{Remark}
\newtheorem{example}[theorem]{Example}
\g@addto@macro{\endabstract}{\@setabstract}
\newcommand{\authorfootnotes}{\renewcommand\thefootnote{\@fnsymbol\c@footnote}}%
\begin{document}
\begin{center}
  \LARGE
\textbf{Biderivations of the twisted Heisenberg-Virasoro algebra and their applications }

  \normalsize
  \authorfootnotes
Xiaomin Tang  \footnote{Corresponding author: {\it X. Tang. Email:} x.m.tang@163.com}
,Xiaotong Li
\par \bigskip
   \textsuperscript{1}Department of Mathematics, Heilongjiang University,
Harbin, 150080, P. R. China   \par

\end{center}

%%%%%%%%%%%%%%%%%%%%%%

\begin{abstract}

In this paper, the biderivations without the skew-symmetric condition of the twisted Heisenberg-Virasoro algebra are presented. We find some non-inner and non-skew-symmetric biderivations. As applications, the characterizations of the forms of linear commuting maps and the commutative post-Lie algebra structures on the twisted Heisenberg-Virasoro algebra are given. It also is proved that every biderivation of the graded twisted Heisenberg-Virasoro left-symmetric algebra is trivial.

\vspace{2mm}

\noindent{\it Keywords:} biderivation, twisted Heisenberg-Virasoro algebra,  linear commuting maps, post-Lie algebra, left-symmetric algebra

\noindent{\it AMS subject classifications:} 17B05, 17B40, 17B68.

\end{abstract}

\setcounter{section}{0}
{\ }

 \baselineskip=20pt

%%%%%%%%%%%%%%%%%%%%%%%%%%%%%%%%%%%%%%      text      %%%%%%%%%%%%%%%%%%%%%%%%%%%%%%%%%%%%%%%%%%
\section{Introduction}\label{intro}

Derivations and generalized derivations are very important subjects in algebra.
Now let us recall the definitions of the derivation and biderivation of an algebra as follows. Suppose that $L$ be a vector space equipped with a binary operation $(x, y)\mapsto x \circ y$ for all $x,y\in L$. As usually, $(L, \circ)$ is said to be an algebra (not necessarily is an associative algebra, for example, as the Lie algebra, left-symmetric algebra, etc.).
\begin{itemize}
\item A linear map $\phi: L\rightarrow L$ is called a derivation if it satisfies
\begin{eqnarray*}
\phi(x\circ y)=\phi(x)\circ y +x \circ \phi(y)
\end{eqnarray*}
for all $x, y\in L$. If $L$ is a Lie algebra, then we denote by $x\circ y=[x,y]$ for all $x,y\in L$. In this case, for $x\in L$, it is easy to see that $\phi_x: L\rightarrow L, y\mapsto {\rm ad} x(y)=[x,y], $ for all $y\in L$ is a derivation of $L$, which is called an inner derivation.  Denote by ${\rm{Der}} (L)$ and by ${\rm{Inn}} (L)$ the space of derivations and the space of inner derivations of $L$ respectively.
\end{itemize}

\begin{itemize}
\item A bilinear map $f : L\times L \rightarrow L$ is called a
biderivation of $L$ if it is a derivation with respect to both components. Namely, for each $x\in L$, both linear maps $\phi_x$ and $\psi_x$ form $L$ into itself given by $\phi_x=f(x, \cdot)$ and $\psi_x=f(\cdot, x)$ are derivations of $L$, i.e.,
\begin{eqnarray}
f(x\circ y,z)=x \circ f(y,z)+ f(x,z)\circ y, \label{2der}\\
f(x,y \circ z)=f(x,y)\circ z+y \circ f(x,z)  \label{1der}
\end{eqnarray}
for all $x, y, z\in L$. Denote by $B(L)$ the set of all biderivations of $L$. For a Lie algebra $L$
and $\lambda\in \mathbb{C}$, it is easy to verify that the bilinear map $f: L\times L\rightarrow L$ given by $f(x,y)=\lambda [x,y]$ for all $y\in L$
is a biderivation of $L$. Such biderivation is said to be inner. Recall that $f$ is skew-symmetric if $f(x,y)=-f(y,x)$ for all $x,y\in L$.
\end{itemize}

 In recent years, many authors put so much effort into the problems of biderivations \cite{Bre1995,Chen2016,c2017,Du2013,Gho2013,Hanw,tang2016,WD1,WD3,WD2}. In \cite{Bre1995}, Bre\v{s}ar et al. showed that all biderivations on commutative prime rings are inner biderivations, and determine the biderivations of semiprime rings. The notion of biderivations of Lie algebras was introduced in \cite{WD3}.
  In recent years, many authors put so much effort into this problem \cite{Bre1995,Chen2016,c2017,Du2013,Gho2013,Hanw,tang2016,vela,WD1,WD3}. In this paper, we shall study the biderivation of the twisted Heisenberg-Virasoro algebra. The twisted Heisenberg-Virasoro algebra is one of the most important Lie algebras both in mathematics and in mathematical physics. The structure and representation theories of the twisted Heisenberg-Virasoro algebra and its various extended Lie algebras have been extensively investigated (see, e.g.,\cite{Billig,Chenhj,Chenhj2,Shenr,Wangq}). Now we give the definition of the twisted Heisenberg-Virasoro algebra and an example of its biderivation as follows.

\begin{definition}
The twisted Heisenberg-Virasoro algebra is the Lie algebra $(\mathcal{H}, [, ])$ which has $\mathbb{C}$-basis $$\{L_n, I_n, C_1,C_{2}, C_3| n\in \mathbb{Z} \}$$
and the following Lie brackets, for all $m,n\in \mathbb{Z}$,
\begin{eqnarray*}
&&[L_n,L_m ]=(n-m)L_{m+n}+\frac{n^3-n}{12}\delta_{n,-m}C_1,\\
&& [L_n,I_m ]=-mI_{m+n} -{(n^2+n)}\delta_{n,-m}C_2, \\
&&[I_n,I_m ]=n\delta_{n,-m}C_3, \ [\mathcal{H},C_1 ]=[\mathcal{H},C_2 ]=[\mathcal{H},C_3 ]=0.
\end{eqnarray*}
\end{definition}

\begin{example}\label{example}
Let $f: \mathcal{H}\times \mathcal{H} \rightarrow \mathcal{H}$ be a bilinear map determined by $f(L_m, L_n)=I_{m+n}$, $f(L_m, I_n)=f(I_n, L_m)=f(I_n,I_m)=0$ and $f(x,C_i)=f(C_i,y)=0, i=1,2,3$ for all $m,n\in \mathbb{Z}$ and $x,y\in \mathcal{H}$. Then it is easy to verify that $f$ is a biderivation of $\mathcal{H}$. Note that it is non-inner and non-skew-symmetric.
\end{example}

Throughout the paper, the symbols $\mathbb{C}$ and $\mathbb{Z}$ represent for the sets of complex numbers and integers, respectively. Recall that a Lie algebra $L$ is said to be perfect if $L=[L,L]$. Note that the twisted Heisenberg-Virasoro algebra is perfect. Denote by $Z(L)$ the center of a Lie algebra $L$. Obviously, $Z(\mathcal{H})={\rm Span}\{I_0, C_1, C_2, C_3\}$. We denote a subset of $Z(\mathcal{H})$ by $Z^\prime(\mathcal{H})={\rm Span}\{ C_1, C_2, C_3\}$.

 Our results can be briefly summarized as follows: In Section \ref{sec2}, we give some lemmas which will be used to our proof. In Section \ref{sec3}, we characterize the biderivations without the skew-symmetric condition of the twisted Heisenberg-Virasoro algebra. In Section \ref{sec4}, we give three applications of biderivation of the twisted Heisenberg-Virasoro algebra, i.e., the characterizations of the forms of linear commuting maps and the commutative post-Lie algebra structures on the twisted Heisenberg-Virasoro algebra are given. As a simple corollary, we prove that every biderivation of the graded twisted Heisenberg-Virasoro left-symmetric algebra is trivial.

\section{Preliminary results}\label{sec2}

\begin{lemma}\label{refe} Suppose that $k_i^{(n)},h_i^{(m)}\in\mathbb{C},i,m,n\in\mathbb{Z}$ such that
\begin{eqnarray}(i-m)k_i^{(n)}=(2n-m-i)h_{m-n+i}^{(m)}\label{aaa4}
\end{eqnarray}
for all $m,n,i\in\mathbb{Z} $. Then there is $\lambda\in\mathbb{C}$ such that $k_i^{(m)}=h_i^{(m)}=\delta_{m,i}\lambda$ for all $ m,i \in\mathbb{Z}$.
\end{lemma}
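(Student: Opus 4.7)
The plan is to extract information by clever specialization of the free index $m$, then substitute back into the general identity.

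First I would set $m=i$ in \eqref{aaa4}. The left-hand side vanishes identically, so the equation reduces to
\[
(2n-2i)\,h_{2i-n}^{(i)}=0\qquad\text{for all }i,n\in\mathbb{Z}.
\]
Whenever $n\neq i$, the prefactor $2n-2i$ is nonzero, hence $h_{2i-n}^{(i)}=0$. Reindexing by $j=2i-n$ (so $j\neq i \Leftrightarrow n\neq i$), this says $h_j^{(i)}=0$ for every $j\neq i$. In other words, $h_j^{(i)}=\delta_{j,i}\,h_i^{(i)}$, so only the ``diagonal'' coefficients of $h$ survive.

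Next I would feed this information back into \eqref{aaa4}. Since $h_{m-n+i}^{(m)}=\delta_{m-n+i,\,m}\,h_m^{(m)}=\delta_{n,i}\,h_m^{(m)}$, the identity becomes
\[
(i-m)\,k_i^{(n)}=(2n-m-i)\,\delta_{n,i}\,h_m^{(m)}\qquad\text{for all }m,n,i\in\mathbb{Z}.
\]
If $n\neq i$, the right-hand side is zero, so $(i-m)k_i^{(n)}=0$ for all $m$; choosing $m\neq i$ forces $k_i^{(n)}=0$. Thus also $k_i^{(n)}=\delta_{n,i}\,k_i^{(i)}$.

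Finally, specializing to $n=i$ the displayed equation reads $(i-m)k_i^{(i)}=(i-m)h_m^{(m)}$, i.e.\ $(i-m)\bigl(k_i^{(i)}-h_m^{(m)}\bigr)=0$ for all $m,i$. Taking any $m\neq i$ gives $k_i^{(i)}=h_m^{(m)}$, so both diagonals are constant in their indices; call this common value $\lambda$. Combined with the two reductions above, this yields $k_i^{(m)}=h_i^{(m)}=\delta_{m,i}\lambda$ as required.

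The argument is essentially an indicial computation with no real obstacle; the only thing to be careful about is the reindexing $j=2i-n$ in the first step and keeping the two independent free parameters $m$ and $i$ distinct at the end when we vary $m\neq i$ to eliminate the $(i-m)$ prefactor. I would write the proof linearly in these three steps.
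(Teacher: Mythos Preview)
Your argument is correct and essentially matches the paper's proof. The paper makes three simultaneous specializations of \eqref{aaa4} (namely $i=2n-m$, $i=m$, and $i=n$) to obtain $k_{2n-m}^{(n)}=h_{2m-n}^{(m)}=0$ and $k_n^{(n)}=h_m^{(m)}$ directly, whereas you use only the $m=i$ specialization to kill the off-diagonal $h$'s and then substitute back to handle $k$ and the diagonals; this is the same index manipulation organized slightly differently.
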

\begin{proof}
For any $ m,n$ with $m\neq n$, by taking $i=2n-m$,$m$ and $i=n$ in (\ref{aaa4}) respectively, we have
\begin{eqnarray}
k_{2n-m}^{(n)}=h_{2m-n}^{(m)}=0,k_{n}^{(n)}=h_{m}^{(m)},\forall m,n\in\mathbb{Z} \ {\rm with}\  m\neq n.\label{aaa5}
\end{eqnarray}

Let $m,n$ run all integers with $ m\neq n$, then by (\ref{aaa5}) we see that $k_{i}^{(n)}=0,h_{j}^{(m)}=0$ for all $i\neq n,\ j\neq m$ and $k_{n}^{(n)}=h_{n}^{(n)}=h_{0}^{(0)}$
for all $n\in\mathbb{Z}$. By letting $h_{0}^{(0)}=\lambda$, the proof is completed.
\end{proof}
\begin{lemma}\label{refer}
 Suppose that $t_i^{(n)},g_i^{(m)}\in \mathbb{C},\ i,m,n \in \mathbb {Z} $ and $\rho_2,\rho_3, \theta_2,\theta_3$
 are linear complex valued functions on $ \widetilde{\mathcal{H}}$ satisfy
 \begin{eqnarray} it_i^{(n)}=-(m-n+i)g_{m-n+i}^{(m)}, \ \text{for}\  i\neq 0,n-m \label{aaa6}\\
 (m-1)\rho_2(L_n)+m\rho_3(L_n)=-(m-n)g_{m-n}^{(m)},\label{aaa7} \\
 (n-1)\theta_2(L_m)+n\theta_3(L_m)=(n-m)t_{n-m}^{(n)} \label{aaa8}
 \end{eqnarray}
 for all $i,m,n \in \mathbb {Z}$. Then there is a set of complex numbers
 $\Omega=\{\mu_k\in \mathbb{C}| k\in \mathbb{Z}\}$ such that
 \begin{eqnarray}
 t_{n+k}^{(n)}=-g_{n+k}^{(n)}=\frac{\mu_k}{n+k} ,\forall k\in\mathbb{Z}\backslash \{-n\}, \label{aaa9}
  \end{eqnarray}
  \begin{eqnarray}
 \rho_2(L_n)=\theta_2(L_n)=-\mu_{-n}, \label{aa10}
  \end{eqnarray}
    \begin{eqnarray}
 \rho_3(L_n)=\theta_3(L_n)=\mu_{-n}. \label{aa11}
  \end{eqnarray}
\end{lemma}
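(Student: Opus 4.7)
My plan is to first use (\ref{aaa6}) to extract a single $\mathbb{Z}$-indexed family of scalars $\{\mu_k\}$ encoding both $t_i^{(n)}$ and $g_i^{(m)}$, and then to read off $\rho_2(L_n), \rho_3(L_n), \theta_2(L_m), \theta_3(L_m)$ by substituting these formulas into (\ref{aaa7}) and (\ref{aaa8}).

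The first step is a reindexing of (\ref{aaa6}) via $k=i-n$, so that $i=n+k$ and $m-n+i=m+k$; the hypothesis then reads
\[
(n+k)\,t_{n+k}^{(n)} \;=\; -(m+k)\,g_{m+k}^{(m)}, \qquad k\neq -n,\; k\neq -m.
\]
Crucially, the left-hand side depends only on $(n,k)$ and the right-hand side only on $(m,k)$, so for each fixed $k$ the common value is independent of the choice of $n\neq -k$ and of $m\neq -k$; the infinitude of $\mathbb{Z}$ ensures that such $n$ and $m$ exist. Call this common value $\mu_k$. Then $t_{n+k}^{(n)}=\mu_k/(n+k)$ for $k\neq -n$ and $g_{m+k}^{(m)}=-\mu_k/(m+k)$ for $k\neq -m$, and specializing $m=n$ in the second formula yields $g_{n+k}^{(n)}=-\mu_k/(n+k)=-t_{n+k}^{(n)}$, which is exactly (\ref{aaa9}).

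For (\ref{aa10}) and (\ref{aa11}), I substitute the $g$-formula (with $k=-n$, valid whenever $m\neq n$) into the right-hand side of (\ref{aaa7}): $-(m-n)g_{m-n}^{(m)}=-(m-n)\bigl(-\mu_{-n}/(m-n)\bigr)=\mu_{-n}$. Hence
\[
m\bigl(\rho_2(L_n)+\rho_3(L_n)\bigr) - \rho_2(L_n) \;=\; \mu_{-n} \qquad\text{for all } m\neq n.
\]
A polynomial of degree at most $1$ in $m$ that is constant on an infinite subset of $\mathbb{Z}$ must have vanishing leading coefficient and constant term equal to the given value, forcing $\rho_2(L_n)+\rho_3(L_n)=0$ together with $\rho_2(L_n)=-\mu_{-n}$, and hence $\rho_3(L_n)=\mu_{-n}$. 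An entirely parallel argument, substituting the $t$-formula (with $k=-m$) into (\ref{aaa8}) and viewing the result as a polynomial identity in $n$ on $\mathbb{Z}\setminus\{m\}$, yields $\theta_2(L_m)=-\mu_{-m}$ and $\theta_3(L_m)=\mu_{-m}$.

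I expect the main obstacle to be the variable-separation step of the first paragraph: one must carefully justify that $\mu_k$ is well-defined, i.e., that $(n+k)t_{n+k}^{(n)}$ does not depend on the particular $n\neq -k$ one selects (and similarly for $m$). Once this consolidation is in place, the remainder of the proof is a routine application of the principle that a degree-$\le 1$ polynomial constant on an infinite subset of $\mathbb{Z}$ is a constant function.
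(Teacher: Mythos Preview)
Your proof is correct and follows essentially the same route as the paper: both reindex (\ref{aaa6}) via $i=n+k$ to obtain $(n+k)t_{n+k}^{(n)}=-(m+k)g_{m+k}^{(m)}$, extract a single family $\{\mu_k\}$ from this identity, and then read off $\rho_2,\rho_3,\theta_2,\theta_3$ from (\ref{aaa7})--(\ref{aaa8}) by a ``polynomial in $m$ (resp.\ $n$) constant on an infinite set'' argument. The only cosmetic difference is that the paper fixes a concrete anchor point, setting $\mu_k:=t_1^{(1-k)}$ via $m=n=1-k$, whereas you define $\mu_k$ abstractly as the common value guaranteed by the separation-of-variables structure; your justification of well-definedness is sound and no further work is needed there.
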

\begin{proof}
By taking $i=n+k$ in (\ref{aaa6}) with $k\neq-m$, $k\neq-n$, we have
 \begin{eqnarray}
(n+k)t_{n+k}^{(n)}=-(m+k)g_{m+k}^{(m)}. \label{aa12}
  \end{eqnarray}
  Let $m=n=1-k$ in (\ref{aa12}), one has $t_{1}^{(1-k)}=-g_{1}^{(1-k)}$. Denote $\mu_k=t_{1}^{(1-k)}$.
  By using (\ref{aa12}) with $m=1-k$, we deduce that $t_{n+k}^{(n)}=\frac{\mu_k}{n+k} $  for all $ k\in\mathbb{Z}\backslash \{-n\}$.
  Similarly, by using (\ref{aa12}) with $n=1-k$ we
  see that $g_{m+k}^{(m)}=-\frac{\mu_k}{m+k} $  for all $ k\in\mathbb{Z}\backslash \{-m\}$. This proves that (\ref{aaa9}) holds. Notice that (\ref{aaa9}) implies $(m-n)g_{m-n}^{(m)}=-\mu_{-n}.$ This, together with (\ref{aaa7}), yields that
 \begin{eqnarray}
(m-1)(\rho_2(L_n)+\rho_3(L_n))+\rho_3(L_n)=\mu_{-n}, \forall n\in \mathbb{Z}. \label{aa13}
\end{eqnarray}
This indicates that $\rho_3(L_n)=-\rho_2(L_n)=\mu_{-n}$. Similarly, notice that (\ref{aaa9}) implies that $(n-m)t_{n-m}^n=\mu_{-m}$ and then by (\ref{aaa8}) we deduce that  $-\theta_2(L_m)=\theta_3(L_m)=\mu_{-m}$. The proof is completed.
\end{proof}

\begin{lemma}\label{refere}
Suppose that $s_i^{(n)},l_i^{(n)}\in\mathbb{C}$ for all $i,n\in\mathbb{Z}$.

(i) If $(m-i)s_i^{(n)}=0$ for all $i,m,n \in \mathbb {Z}$, then $s_i^{(n)}=0$ for all $i,n \in \mathbb {Z}.$

(ii) If $j l_j^{(m)}=0$ for all $j\neq m, m-n$, then $l_j^{(m)}=0$ for all $j\neq 0$.
\end{lemma}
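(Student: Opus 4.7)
The plan is to exploit the universal quantification over $m$ and $n$ in both hypotheses: by specialising these auxiliary integers suitably, we can arrange the coefficient in front of the unknown to be nonzero and thereby force the unknown itself to vanish.

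For part (i), I would fix arbitrary $i, n \in \mathbb{Z}$ and observe that the hypothesis $(m-i)s_i^{(n)}=0$ is asserted for \emph{every} $m\in\mathbb{Z}$. Choosing $m = i+1$ makes the scalar $m-i$ equal to $1$, which immediately gives $s_i^{(n)} = 0$. Since $i$ and $n$ were arbitrary, this settles (i).

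For part (ii), I would fix $j \neq 0$ and $m \in \mathbb{Z}$ and look for an integer $n$ for which both exclusion conditions $j \neq m$ and $j \neq m-n$ are simultaneously satisfied. Once such an $n$ is produced, the hypothesis yields $jl_j^{(m)} = 0$, and since $j \neq 0$ the desired equality $l_j^{(m)} = 0$ follows. The condition $j \neq m$ is independent of $n$, so the argument applies whenever $j \neq m$; then taking $n = 0$ (for instance) gives $m - n = m \neq j$, as required.

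Neither part presents a genuine obstacle; the lemma is essentially a bookkeeping tool that passes from a coefficient identity carrying a universally quantified parameter to the vanishing of the individual coefficients, and I expect the write-up to occupy only a few lines. The mildest point of care lies in part (ii), where one must verify that both exclusions can be avoided by a single choice of $n$, a task accomplished by invoking the infinitude of $\mathbb{Z}$ (the forbidden values of $n$ form only a finite subset).
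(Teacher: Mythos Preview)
Your argument for (i) is correct and is precisely the specialization trick the paper intends (its own proof is just the pointer ``similar to Lemma~\ref{refe}'').

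For (ii) there is a gap. You fix $j\neq 0$ and $m$ and then seek an $n$ making both $j\neq m$ and $j\neq m-n$ hold; you correctly note that the first condition does not involve $n$, so your construction only goes through when $j\neq m$---and then you stop. The case $j=m\neq 0$ is left untouched. In fact this case cannot be derived from the stated hypothesis at all: whatever $n$ you choose, the clause $j\neq m$ fails, so the hypothesis imposes no constraint whatsoever on $l_m^{(m)}$. The conclusion of (ii) is therefore overstated in the paper and should read ``$l_j^{(m)}=0$ for all $j\notin\{0,m\}$.'' This corrected version is exactly what the paper actually uses: in the proof of Lemma~\ref{fxc=2} the conclusion drawn from Lemma~\ref{refere} is $l_j^{(m)}=0$ for $j\neq 0,m$, while $l_m^{(m)}=\lambda$ is obtained separately from a different coefficient comparison. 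So your argument is complete for the lemma as it is \emph{applied}; you should simply flag that the printed statement of (ii) is slightly too strong and adjust the conclusion accordingly. Your closing remark that ``the forbidden values of $n$ form only a finite subset'' is misleading in this light, since one of the two exclusions is not a condition on $n$ at all.
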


\begin{proof}
The proof is similar to Lemma \ref{refe}.
\end{proof}

\begin{lemma}\label{fxc=4}
Let $L$ be a perfect Lie algebra and  $f$ be a biderivation of $L$.
If $\alpha \in Z(L)$, then $f(x, \alpha)=f(\alpha,x)=0$ for all $x\in L$.
\end{lemma}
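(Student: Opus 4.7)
The plan is to proceed in two steps. First, using only that $\alpha$ is central, I would show that both $f(x,\alpha)$ and $f(\alpha,x)$ automatically lie in $Z(L)$ for every $x\in L$. Then I would invoke the perfectness hypothesis $L=[L,L]$ to rewrite an arbitrary $x$ as a sum of brackets and feed it back into the biderivation identities, collapsing everything to zero.

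For the first step, since $\alpha$ is central, $[\alpha,y]=0$ for every $y\in L$, hence $f(x,[\alpha,y])=0$. Expanding the left-hand side via identity (\ref{1der}) yields
\[ 0 = f(x,[\alpha,y]) = [f(x,\alpha),y] + [\alpha, f(x,y)] = [f(x,\alpha),y], \]
the second bracket vanishing because $\alpha\in Z(L)$. Since $y$ is arbitrary, $f(x,\alpha)\in Z(L)$. A symmetric computation, this time applying identity (\ref{2der}) to the identity $f([\alpha,y],x)=0$, shows that $f(\alpha,x)\in Z(L)$ as well.

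For the second step, write an arbitrary $x\in L$ as $x=\sum_i[y_i,z_i]$, which is possible by perfectness. Identity (\ref{2der}) then gives
\[ f(x,\alpha) = \sum_i f([y_i,z_i],\alpha) = \sum_i\bigl([y_i, f(z_i,\alpha)] + [f(y_i,\alpha), z_i]\bigr), \]
and every term on the right vanishes because each of $f(y_i,\alpha)$ and $f(z_i,\alpha)$ already lies in $Z(L)$ by Step 1. The identical argument, using (\ref{1der}) instead, yields $f(\alpha,x)=0$.

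There is no genuine obstacle here; the argument is short and formal. The only thing to watch is to apply the \emph{correct} biderivation identity at each stage: identity (\ref{1der}) to move $\alpha$ into the second slot during Step 1 (and (\ref{2der}) for its mirror), and the opposite choice in Step 2, so that the centralization established in Step 1 exactly kills the brackets produced when $x$ is decomposed.
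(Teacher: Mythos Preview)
Your proof is correct and follows essentially the same two-step strategy as the paper: first show $f(x,\alpha),f(\alpha,x)\in Z(L)$ using the biderivation identities applied to $[\alpha,y]=0$, then use perfectness to write $x$ as a sum of brackets and kill each term via Step~1. The paper presents the argument in nearly identical form, only phrasing Step~1 in terms of the partial derivation $\phi_x=f(x,\cdot)$ rather than directly citing identity~(\ref{1der}).
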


\begin{proof}
For each $x\in L$, since $\phi_x=f(x, \cdot)$ is a derivation of $L$, so $0=\phi_x ([\alpha,y])=[\phi_x(\alpha), y]+[\alpha,\phi_x(y)]=[\phi_x(\alpha), y]$
for all $y\in L$. In the other words, $\phi_x(\alpha)=f(x, \alpha)\in Z(L)$. Similarly, we have $f(\alpha,x)\in Z(L)$. Now from (\ref{2der})
 and (\ref{1der}) one has $f([y,z],\alpha)=[y,f(z,\alpha)]+[f(y,\alpha),z]=0$ and $f(\alpha,[y,z])=[f(\alpha,y),z]+[y,f(\alpha,z)]=0$
  for all $y,z\in L$. Thanks to $L=[L,L]={\rm Span}\{[y,z]|y,z\in L\}$, forcing $f(x, \alpha)=f(\alpha,x)=0$ for all $x\in L$.
\end{proof}

 \begin{lemma}\label{fxc=5}
  Let $L$ be a perfect Lie algebra and $Z^\prime (L)\subset Z(L)$. Then the linear map $\pi: B(L)\rightarrow B(L/Z^\prime (L))$ defined by $\pi(f)(x+Z^\prime (L),y+Z^\prime (L))=f(x,y)+Z^\prime (L)$ is injective. Furthermore, $\pi$ is bijective if and only if any biderivation of $L/Z^\prime (L))$ can be extended to a biderivation of $L$.\end{lemma}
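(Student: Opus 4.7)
The plan is to verify three things in turn: that $\pi$ is well-defined and lands in $B(L/Z'(L))$, that $\pi$ is injective, and finally that the equivalence for bijectivity is essentially tautological once the above are in place.

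First I would check well-definedness. Given $f\in B(L)$ and $z_1,z_2\in Z'(L)\subset Z(L)$, Lemma \ref{fxc=4} (applicable because $L$ is perfect) yields $f(z_1,y)=0=f(x,z_2)$ for all $x,y\in L$. Bilinearity of $f$ then gives $f(x+z_1,y+z_2)=f(x,y)$, so the formula $\pi(f)(x+Z'(L),y+Z'(L))=f(x,y)+Z'(L)$ really does define a bilinear map on the quotient. That this map is a biderivation of $L/Z'(L)$ is routine: the two derivation identities (\ref{2der}) and (\ref{1der}) for $f$ pass to $\pi(f)$ after projecting modulo $Z'(L)$, using that the bracket on $L/Z'(L)$ is induced from the bracket on $L$.

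The main substance is injectivity. Suppose $\pi(f)=0$; then $f(L,L)\subset Z'(L)\subset Z(L)$. For any $a,b,y\in L$, the biderivation identity (\ref{2der}) reads
\begin{eqnarray*}
f([a,b],y)=[f(a,y),b]+[a,f(b,y)],
\end{eqnarray*}
and both brackets on the right vanish since $f(a,y),f(b,y)\in Z(L)$. Hence $f([L,L],L)=0$, and because $L=[L,L]$ is perfect, $f(x,y)=0$ for all $x,y\in L$. Thus $f=0$ and $\pi$ is injective. This is the only nontrivial step, and it is precisely where the two hypotheses (perfectness of $L$ and $Z'(L)\subset Z(L)$, via Lemma \ref{fxc=4}) are both essential.

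Finally, for the equivalence: since $\pi$ is already injective, it is bijective iff it is surjective, i.e., iff every $g\in B(L/Z'(L))$ arises as $\pi(f)$ for some $f\in B(L)$. This is exactly the statement that every biderivation of $L/Z'(L)$ can be extended to a biderivation of $L$, so the equivalence is definitional. I do not anticipate any real obstacles beyond carefully writing out the injectivity computation above.
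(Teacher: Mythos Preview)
Your proposal is correct and follows essentially the same approach as the paper: well-definedness via Lemma~\ref{fxc=4}, injectivity by using the biderivation identity $f([a,b],y)=[f(a,y),b]+[a,f(b,y)]$ together with $f(L,L)\subset Z'(L)\subset Z(L)$ and perfectness of $L$, and then noting the bijectivity statement is tautological. The paper's proof is simply a terser version of what you wrote.
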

\begin{proof} From Lemma \ref{fxc=4}, $\pi$ is well-defined. Now for any $f\in \ker \pi$, i.e., $f(x,y)\in Z^\prime (L),\forall x,y\in L$. Then from
$f([x,y],z)=[f(x,z),y]+[x,f(y,z)]=0$ and $L$ is perfect, we have $f=0$. The second part of the lemma is trivial. \end{proof}

For a Lie algebra $\widetilde{L}$, let $L=\widetilde{L}\oplus Z^\prime (L)$ be a central extension of $\widetilde{L}$. Then it is well known that
$\widetilde{L}\cong L/ Z^\prime(L)$. Furthermore, Lemmas \ref{fxc=4} and \ref{fxc=5} tell us that
if $L$ is perfect then we only need to characterize the biderivation of $\widetilde{L}$.

\section{ Biderivation of the twisted Heisenberg-Virasoro algebra}\label{sec3}

We fist give a class of biderivations of the twisted Heisenberg-Virasoro algebra $\mathcal{H}$ as follows.
\begin{definition} \label{taa}
Let $\Omega=\{\mu_k\in \mathbb{C}| k\in \mathbb{Z}\}$ be a set satisfying $|\Omega\vert < +\infty$, $i.e.,\Omega$ only contains finitely many nonzero numbers. For such $\Omega$, we define a bilinear map $r_\Omega: \mathcal{H}\times \mathcal{H}\rightarrow \mathcal{H}$  given by
\begin{eqnarray}
 r_\Omega(L_m,L_n)=\sum_{k \in \mathbb{Z}}\mu_k I_{m+n+k} \label{fff3}
\end{eqnarray}
for all $m,n\in\mathbb{Z}$ and $ r_\Omega(x,y)=0$ if either of $x,y$ is contained in $\{C_1, C_2, C_3, I_n| n\in \mathbb{Z}\}$.
\end{definition}

It is easy to verify that the above $r_\Omega$ is a biderivation of twisted Heisenberg-Virasoro algebra. Note that $r_\Omega$ is symmetric and non-inner. The case in which $\Omega=\{\mu_0=1, \mu_i=0|i\in \mathbb{Z}\setminus \{0\}\}$  gives just Example \ref{example}. Our main result is following.

\begin{theorem}\label{posttheo}
$f$ is a biderivation  of $\mathcal{H}$ if and only if there are $\lambda \in \mathbb{C}$ and a finite set $\Omega=\{\mu_k\in \mathbb{C}| k\in \mathbb{Z}\}$ such that
\begin{eqnarray}
f(x,y)=\lambda[x,y]+r_\Omega(x,y) \label{fff4}
\end{eqnarray}
for all $x,y\in \mathcal{H}$, where $r_\Omega$ is given by Definition \ref{taa}.
\end{theorem}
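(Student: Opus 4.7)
The plan is to exploit Lemma~\ref{fxc=5} to reduce the problem to classifying biderivations of the quotient $\widetilde{\mathcal{H}}=\mathcal{H}/Z'(\mathcal{H})$, which has basis $\{L_n,I_n\mid n\in\mathbb{Z}\}$ with the simplified brackets $[L_m,L_n]=(m-n)L_{m+n}$, $[L_m,I_n]=-nI_{m+n}$, and $[I_m,I_n]=0$. Lemma~\ref{fxc=5} gives injectivity of $\pi$ immediately since $\mathcal{H}$ is perfect; and since both $\lambda[\cdot,\cdot]$ and $r_\Omega$ from Definition~\ref{taa} are manifestly well-defined on $\mathcal{H}$ itself, every biderivation of $\widetilde{\mathcal{H}}$ of the claimed form lifts verbatim to $\mathcal{H}$, giving bijectivity of $\pi$. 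It therefore suffices to prove the theorem on $\widetilde{\mathcal{H}}$.

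For a biderivation $f$ of $\widetilde{\mathcal{H}}$, I would expand each of $f(L_m,L_n),\ f(L_m,I_n),\ f(I_m,L_n),\ f(I_m,I_n)$ as finite sums in the basis with unknown coefficients, and substitute into the identities \eqref{2der} and \eqref{1der}. The heart of the matter is the $(L_p,L_q,L_r)$ triple: its output splits into an $L$-part and an $I$-part, and after separating them one reads off two independent subsystems on the coefficients of $f(L_m,L_n)$. The $L$-subsystem is precisely the recursion of Lemma~\ref{refe}, forcing the $L$-part of $f(L_m,L_n)$ to be $\lambda(m-n)L_{m+n}$ for some single scalar $\lambda\in\mathbb{C}$. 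The $I$-subsystem, combined with the $I$-coefficients of $f(L_m,I_n)$ and $f(I_n,L_m)$ regarded as linear functionals in their $L$-argument (which play the role of $\rho_2,\rho_3,\theta_2,\theta_3$), falls exactly under Lemma~\ref{refer} and produces the family $\Omega=\{\mu_k\}$, yielding $r_\Omega(L_m,L_n)$.

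Substituting into the identities with triples $(L_p,L_q,I_r)$, $(L_p,I_q,I_r)$, and $(I_p,I_q,L_r)$ then forces the remaining values of $f$. Using the already-determined form on $L$--$L$ pairs, the resulting equations are of the vanishing type handled by Lemma~\ref{refere}: the $L$-components of $f(L_m,I_n),\ f(I_m,L_n),\ f(I_m,I_n)$ must vanish, the $I$-components of $f(L_m,I_n),\ f(I_m,L_n)$ reduce to the $\lambda$-term $-\lambda n I_{m+n}$, and $f(I_m,I_n)=0$. Assembling the four cases yields $f(x,y)=\lambda[x,y]+r_\Omega(x,y)$ on $\widetilde{\mathcal{H}}$, and hence on $\mathcal{H}$ by the reduction.

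The main obstacle I anticipate is the bookkeeping in the $L$--$L$ case: because $[L_p,L_q]$ produces an $L$-term while $[L_p,I_q]$ produces an $I$-term, the biderivation identities genuinely couple the $L$- and $I$-coefficients of $f(L_m,L_n)$ with those of $f(L_m,I_n)$ and $f(I_n,L_m)$, and the correct identification of the auxiliary functions $\rho_i,\theta_i$ in Lemma~\ref{refer} with the linear-in-$L$ part of these mixed coefficients is the key conceptual step that lets the preliminary lemmas be applied cleanly. Once that identification is in place, the remaining calculations are essentially routine substitutions into Lemmas~\ref{refe}, \ref{refer}, and~\ref{refere}.
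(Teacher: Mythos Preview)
Your overall architecture---reduce to $\widetilde{\mathcal H}$ via Lemma~\ref{fxc=5}, determine $f$ on basis pairs, then lift---matches the paper. The substantive divergence is that the paper does \emph{not} expand $f(L_m,L_n)$ directly in the basis. Instead it invokes Lemma~\ref{fxc=0} (the known derivation algebra of $W(0,0)$) to write, for each fixed $x$, the derivation $f(x,\cdot)$ as ${\rm ad}\,\phi(x)+\rho_1(x)D_1+\rho_2(x)D_2^{0,0}+\rho_3(x)D_3$, and symmetrically with $\psi,\theta_i$ in the other slot (Lemma~\ref{referee}). It is the equality of these two one-parameter descriptions of the \emph{same} bilinear map that produces exactly the systems of Lemmas~\ref{refe} and~\ref{refer}: the unknowns $k_i^{(n)},h_j^{(m)},t_i^{(n)},g_j^{(m)}$ are coefficients of $\phi(L_n)$ and $\psi(L_m)$, objects depending on a \emph{single} index, and $\rho_2,\rho_3,\theta_2,\theta_3$ are by definition the outer-derivation coefficients in that decomposition.

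Your proposal bypasses Lemma~\ref{fxc=0} and works with coefficients $a_i^{(n,m)}$ of $f(L_n,L_m)$ depending on \emph{both} indices. The biderivation identities then give recursions in each variable separately, not the single matching equation \eqref{aaa4}; so the claim that ``the $L$-subsystem is precisely the recursion of Lemma~\ref{refe}'' is not literally true. More seriously, your identification of $\rho_2,\rho_3,\theta_2,\theta_3$ with ``the $I$-coefficients of $f(L_m,I_n)$ and $f(I_n,L_m)$'' is wrong: since $D_2^{0,0}(I_m)=D_3(I_m)=0$, these functionals contribute nothing to $f(L_n,I_m)$; they live entirely in the $I_m$-component of $f(L_n,L_m)$ (see \eqref{aa14}). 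Without the derivation classification you have no a~priori reason that this special component behaves linearly in $m$ of the form $(m-1)\rho_2+m\rho_3$, which is exactly the hypothesis \eqref{aaa7}--\eqref{aaa8} of Lemma~\ref{refer}. A direct approach can be pushed through, but it requires either re-deriving the content of Lemma~\ref{fxc=0} implicitly or proving analogues of Lemmas~\ref{refe}--\ref{refere} for two-index coefficient families; as written, the preliminary lemmas do not slot in the way you describe.
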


The proof of Theorem \ref{posttheo} will be completed by several lemmas as follows. Let $\widetilde{\mathcal{H}}=\mathcal{H} /Z^{\prime}(\mathcal{H})$ where $Z^\prime(\mathcal{H})={\rm Span}\{ C_1, C_2, C_3\}$. Then $\widetilde{\mathcal{H}}$ just is the Lie algebra $W(0,0)$ \cite{Shoulan},  which has the basis  $\{L_n,I_n \lvert n\in\mathbb{Z}\}$ and the Lie bracket
\begin{eqnarray}
\ [L_m,L_n ]=(m-n)L_{m+n},
\ [L_m,I_n ]=-nI_{m+n}, \ [I_m,I_n ]=0.\label{aaa1}
\end{eqnarray}
\begin{lemma}\label{fxc=0} \cite{Shoulan}
${\rm{Der}} \widetilde{\mathcal{H}}={\rm{I{nn}}} \widetilde{\mathcal{H}} \oplus \mathbb{C} D_1\oplus \mathbb{C} D_2^{0,0}\oplus \mathbb{C} D_3$,
where $D_1,D_2^{0,0},D_3$ are outer derivations defined by $D_1(L_m)=0$, $D_1(I_m)=I_m$,
$D_2^{0,0}(L_m)=(m-1)I_m,$ $D_2^{0,0}(I_m)=0,$
$D_3(L_m)=mI_m,$ $D_3(I_m)=0$ for all $m\in \mathbb{Z}$.
\end{lemma}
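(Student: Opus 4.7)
The plan is to exploit the $\mathbb{Z}$-grading $\widetilde{\mathcal{H}}=\bigoplus_{n\in\mathbb{Z}}\widetilde{\mathcal{H}}_n$ with $\widetilde{\mathcal{H}}_n=\mathbb{C}L_n\oplus\mathbb{C}I_n$, decompose any derivation as $D=\sum_k D_k$ where $D_k$ raises grading by $k$, and analyze each homogeneous piece. The decomposition is well-defined because $D(x)$ is a finite sum for each $x$, and comparing homogeneous parts of the derivation identity shows each $D_k$ is itself a derivation of $\widetilde{\mathcal{H}}$. The strategy is then to show every $D_k$ with $k\neq 0$ is inner, and that $D_0$ modulo inner derivations is a unique linear combination of $D_1,D_2^{0,0},D_3$.

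For $k\neq 0$, write $D_k(L_n)=a_n^{(k)}L_{n+k}+b_n^{(k)}I_{n+k}$ and $D_k(I_n)=c_n^{(k)}L_{n+k}+d_n^{(k)}I_{n+k}$. Applying $D_k$ to the brackets in (\ref{aaa1}) and comparing coefficients yields recurrences in the four families $a^{(k)},b^{(k)},c^{(k)},d^{(k)}$: the $[L_m,L_n]$-identity forces $c_n^{(k)}=0$ and pins down $a_n^{(k)}$ up to one scalar $\alpha_k$, while the $[L_m,I_n]$-identity determines $b_n^{(k)}$ and $d_n^{(k)}$ up to a second scalar $\beta_k$. Direct comparison with ${\rm ad}(L_k)$ and ${\rm ad}(I_k)$ gives $D_k={\rm ad}(\alpha_k L_k+\beta_k I_k)$. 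Evaluating $D(L_0)=\sum_k(k\alpha_k L_k+k\beta_k I_k)$ together with the finite support of $D(L_0)$ forces $\alpha_k=\beta_k=0$ for all but finitely many $k$, so $\sum_{k\neq 0}D_k={\rm ad}(z)$ with $z=\sum_{k\neq 0}(\alpha_k L_k+\beta_k I_k)\in\widetilde{\mathcal{H}}$.

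For $k=0$, set $D_0(L_n)=a_n L_n+b_n I_n$ and $D_0(I_n)=c_n L_n+d_n I_n$. The $[L_m,L_n]$-identity yields $a_{m+n}=a_m+a_n$ for $m\neq n$, hence $a_n=an$ for some $a\in\mathbb{C}$, which is absorbed by subtracting ${\rm ad}(-aL_0)$; it also gives $(m-n)b_{m+n}=mb_m-nb_n$, whose general solution is $b_n=\alpha_0+\alpha_1 n$. The $[L_m,I_n]$-identity forces $c_n=0$ (by feeding $n=1,2$ into the resulting recurrence) and $d_{m+n}=d_n$ for all $n\neq 0$; specializing $m=-n$ gives $d_0=d_n$, so $d_n$ is a single constant $d$. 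Matching with the definitions of $D_1,D_2^{0,0},D_3$ gives $D_0\equiv dD_1-\alpha_0 D_2^{0,0}+(\alpha_0+\alpha_1)D_3\pmod{{\rm Inn}\widetilde{\mathcal{H}}}$, and a short independent check that $D_1,D_2^{0,0},D_3$ remain linearly independent modulo ${\rm Inn}\widetilde{\mathcal{H}}$ (by inspecting coefficients of $L_0,L_1,I_0,I_1$ in any inner derivation) upgrades the decomposition to a genuine direct sum.

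The main obstacle is step two: the recurrences from the $[L_m,L_n]$ and $[L_m,I_n]$ identities contain degenerate specializations (division-by-zero when $n+k=0$ or $m=n$), so the test indices must be chosen carefully in order to recover the two-parameter family $\alpha_k,\beta_k$ cleanly and rule out any extra solutions. The finite-support argument that packages the degree-wise inner pieces into a single inner derivation is also a genuine content point, since this kind of collapse can fail for $\mathbb{Z}$-graded Lie algebras without a dedicated input.
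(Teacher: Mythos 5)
Your plan is sound, but note first that the paper does not prove this lemma at all: it is quoted verbatim from the reference [Gao--Jiang--Pei, \emph{Comm.\ Algebra} 39 (2011)], so there is no in-paper argument to compare against. Your graded-decomposition strategy is the standard way to prove it and, as outlined, it works: the decomposition $D=\sum_k D_k$ into homogeneous pieces is legitimate, each $D_k$ is a derivation, and your degree-$0$ analysis (yielding $a_n=an$ absorbed by ${\rm ad}(L_0)$, $b_n=\alpha_0+\alpha_1 n$ matched against $D_2^{0,0},D_3$, $c_n=0$, $d_n\equiv d$ matched against $D_1$) together with the independence check gives exactly the stated direct sum. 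Two remarks. First, a small misattribution: the coefficients $c_n^{(k)}$ live in $D_k(I_n)$, so they are killed by the $[L_m,I_n]$ and $[I_m,I_n]$ identities, not by the $[L_m,L_n]$ identity as you wrote; this does not affect the outcome. Second, your ``main obstacle'' in degree $k\neq 0$ dissolves if you use the $L_0$-eigenvalue trick instead of chasing recurrences: since $[L_0,x]=-nx$ for $x$ of degree $n$, applying $D_k$ to $[L_0,x]$ gives $kD_k(x)=[D_k(L_0),x]$, hence $D_k={\rm ad}\bigl(\tfrac{1}{k}D_k(L_0)\bigr)$ for every $k\neq 0$; this also immediately yields the finite-support statement (only finitely many $D_k(L_0)$ are nonzero because $D(L_0)$ is a finite sum) and packages $\sum_{k\neq 0}D_k$ into a single inner derivation, which is the ``genuine content point'' you were worried about.
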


\begin{lemma}\label{referee}
Suppose that $f$ is a biderivation of $ \widetilde{\mathcal{H}},$  Then there are linear maps $\phi$ and $\psi$ from $ \widetilde{\mathcal{H}}$ into itself such that
\begin{eqnarray}
f(x,y)&=&\rho_1(x) D_1(y)+\rho_2(x)D_2^{0,0}(y)+\rho_3(x)D_3(y)+[\phi(x), y],\nonumber\\
&=&\theta_1(y) D_1(x)+\theta_2(y)D_2^{0,0}(x)+\theta_3(y)D_3(x)+[x, \psi(y)] \label{abcd1} \label{aaa2}
\end{eqnarray}
for all $x,y\in \widetilde{\mathcal{H}}$, where $\rho_1, \rho_2,\rho_3$ and $\theta_1,\theta_2,\theta_3$ are linear complex value function on $\widetilde{\mathcal{H}}$ and $D_1,D_2^{0,0},D_3$ is given by Lemma \ref{fxc=0}
\end{lemma}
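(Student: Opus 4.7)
The plan is to apply the derivation classification in Lemma~\ref{fxc=0} to the two families of partial maps built from $f$. First I would fix $y\in\widetilde{\mathcal{H}}$ and consider the linear map $\psi_y:=f(\cdot,y):\widetilde{\mathcal{H}}\to\widetilde{\mathcal{H}}$. Identity (\ref{2der}) says precisely that $\psi_y\in\mathrm{Der}\,\widetilde{\mathcal{H}}$, so Lemma~\ref{fxc=0} produces unique scalars $\theta_1(y),\theta_2(y),\theta_3(y)\in\mathbb{C}$ and an element $u(y)\in\widetilde{\mathcal{H}}$, determined uniquely modulo the center, such that
\[
\psi_y=\theta_1(y)D_1+\theta_2(y)D_2^{0,0}+\theta_3(y)D_3+\mathrm{ad}\,u(y).
\]

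Next I would upgrade $y\mapsto\theta_i(y)$ and $y\mapsto u(y)$ to linear maps. Bilinearity of $f$ makes $y\mapsto\psi_y$ linear into $\mathrm{Der}\,\widetilde{\mathcal{H}}$, and since the decomposition in Lemma~\ref{fxc=0} is a direct sum, the projections onto $\mathbb{C}D_1$, $\mathbb{C}D_2^{0,0}$, $\mathbb{C}D_3$ and $\mathrm{Inn}\,\widetilde{\mathcal{H}}$ are linear. Composing gives linearity of each $\theta_i$ and of $y\mapsto\mathrm{ad}\,u(y)$. A direct calculation with (\ref{aaa1}) shows $Z(\widetilde{\mathcal{H}})=\mathbb{C}I_0$, so $\mathrm{ad}:\widetilde{\mathcal{H}}/\mathbb{C}I_0\to\mathrm{Inn}\,\widetilde{\mathcal{H}}$ is a linear isomorphism. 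Fixing any linear section of $\widetilde{\mathcal{H}}\to\widetilde{\mathcal{H}}/\mathbb{C}I_0$ (for instance, the one sending each coset to its unique representative with zero $I_0$-coefficient in the basis $\{L_n,I_n\mid n\in\mathbb{Z}\}$) promotes $u$ to a linear map $\widetilde{\mathcal{H}}\to\widetilde{\mathcal{H}}$. Setting $\psi(y):=-u(y)$ absorbs the sign from $[\mathrm{ad}\,u(y)](x)=[u(y),x]=-[x,u(y)]$ and yields the second expression in (\ref{aaa2}).

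The first expression is obtained by an entirely symmetric argument: by (\ref{1der}), for each fixed $x$ the map $\phi_x:=f(x,\cdot)$ is a derivation, and applying Lemma~\ref{fxc=0} together with the same linearity and section construction produces linear functionals $\rho_1,\rho_2,\rho_3$ and a linear map $\phi:\widetilde{\mathcal{H}}\to\widetilde{\mathcal{H}}$ with $f(x,y)=\rho_1(x)D_1(y)+\rho_2(x)D_2^{0,0}(y)+\rho_3(x)D_3(y)+[\phi(x),y]$. No sign adjustment is needed here because $\mathrm{ad}\,\phi(x)$ already sends $y$ to $[\phi(x),y]$.

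The only subtle point is guaranteeing linearity of $\phi$ and $\psi$ themselves, rather than of their classes modulo $\mathbb{C}I_0$; this is dispatched uniformly by choosing the linear section described above. No structural computation inside $\widetilde{\mathcal{H}}$ beyond locating its center is required at this step, and the concrete identification of $\phi,\psi,\rho_i,\theta_i$ is deferred to the subsequent lemmas, where the two expressions for $f(x,y)$ will be equated on generators and Lemmas~\ref{refe}--\ref{refere} will be invoked.
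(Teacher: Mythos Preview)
Your proposal is correct and follows essentially the same strategy as the paper: apply the derivation classification of Lemma~\ref{fxc=0} to each partial map $f(x,\cdot)$ and $f(\cdot,y)$, then use bilinearity of $f$ together with the direct-sum decomposition of $\mathrm{Der}\,\widetilde{\mathcal{H}}$ to obtain linearity of the coefficient functionals. Your argument is in fact more careful than the paper's on one point: the paper simply asserts the existence of a linear map $\phi$ without addressing why the inner part can be lifted linearly from $\mathrm{Inn}\,\widetilde{\mathcal{H}}\cong\widetilde{\mathcal{H}}/Z(\widetilde{\mathcal{H}})$ back to $\widetilde{\mathcal{H}}$, whereas you explicitly identify $Z(\widetilde{\mathcal{H}})=\mathbb{C}I_0$ and fix a linear section to handle this.
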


\begin{proof}
For the biderivation $f$ on $ \widetilde{\mathcal{H}}$ and a fixed element $x\in \widetilde{\mathcal{H}}$, we define a map $\phi_x:  \widetilde{\mathcal{H}}\rightarrow  \widetilde{\mathcal{H}}$ is given by $\phi_x(y)=f(x,y)$. Then we know by (\ref{1der}) that $\phi_x$ is a derivation of $ \widetilde{\mathcal{H}}$. By Lemma \ref{fxc=0}, there are three complex value function $\rho_1, \rho_2, \rho_3$ on $\widetilde{\mathcal{H}}$ and a linear map $\phi$ from $\widetilde{\mathcal{H}}$ into itself such that $\phi_x =\rho_1(x) D_1+\rho_2(x) D_2^{0,0} +\rho_3 (x) D_3+{\rm ad}\phi(x)$.
Namely, $f(x,y)=\rho_1(x) D_1(y)+\rho_2(x)D_2^{0,0}(y)+\rho_3(x) D_3(y)+[\phi(x), y]$. Due to $f$ is bilinear, $\rho_1, \rho_2, \rho_3$ are linear. Similarly, if we define a map $\psi_z$ from $\widetilde{\mathcal{H}}$ into itself is given by $\widetilde{\mathcal{H}}$ for all $y\in \widetilde{\mathcal{H}} $, then it can obtained three linear complex valued functions $\theta_1,\theta_2,\theta_3$ on $\widetilde{\mathcal{H}}$ and a map $\psi$ from $\widetilde{\mathcal{H}}$ into itself such that
\begin{eqnarray*}f(x,y)&=&\theta_1(y) D_1(x)+\theta_2(y)D_2^{0,0}(x)+\theta_3(y)D_3(x)+{\rm ad}(-\psi (y))(x)\nonumber\\
&=&\theta_1(y) D_1(x)+\theta_2(y)D_2^{0,0}(x)+\theta_3(y)D_3(x)+[x, \psi(y)].\label{aaa3}
\end{eqnarray*}
 The proof is completed.
\end{proof}

\begin{lemma}\label{ref} Let $f$ be a biderivation of $\widetilde{\mathcal{H}}$ and $\phi, \psi, \rho_i, \theta_i, i=1,2,3$ be given by Lemma \ref{referee}.
Then the following equations hold.
\begin{eqnarray}
f(L_n,L_m)&=&\rho_2(L_n)(m-1)I_m+\rho_3(L_n)mI_m+[\phi (L_n),L_m] \nonumber\\
&=& \theta_2(L_m)(n-1)I_n+\theta_3(L_m)nI_n+[L_n,\psi(L_m)]. \label{aa14}
\end{eqnarray}
\begin{eqnarray}
f(L_n,I_m)&=&\rho_1(L_n)I_m+[\phi (L_n),I_m]\nonumber\\
&=& \theta_2(I_m)(n-1)I_n+\theta_3(I_m)nI_n+[L_n,\psi(I_m)]. \label{aa15}
\end{eqnarray}
\begin{eqnarray}
f(I_n,L_m)&=&\rho_2(I_n)(m-1)I_m+\rho_3(I_n)mI_m+[\phi (I_n),L_m]\nonumber\\
&=& \theta_1(L_m)I_n+[I_n,\psi(L_m)]. \label{aa16}
\end{eqnarray}
\begin{eqnarray}
f(I_n,I_m)&=&\rho_1(I_n)I_m+[\phi (I_n),I_m]=\theta_1(I_m)I_n+[I_n,\psi(I_m)]. \label{aa17}
\end{eqnarray}
\end{lemma}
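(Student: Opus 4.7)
The proof proposal is straightforward. The statement is a routine specialization of Lemma \ref{referee} to the basis elements $L_n,L_m,I_n,I_m\in\widetilde{\mathcal{H}}$, using the explicit formulas for $D_1,D_2^{0,0},D_3$ from Lemma \ref{fxc=0}. The plan is simply to substitute and simplify; there is essentially no obstacle.

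More concretely, I would proceed as follows. First, recall from Lemma \ref{fxc=0} the action of the outer derivations: $D_1(L_m)=0$, $D_1(I_m)=I_m$, $D_2^{0,0}(L_m)=(m-1)I_m$, $D_2^{0,0}(I_m)=0$, $D_3(L_m)=mI_m$, $D_3(I_m)=0$. Then, for each of the four pairs of basis elements $(L_n,L_m)$, $(L_n,I_m)$, $(I_n,L_m)$, $(I_n,I_m)$, I would substitute into both expressions given in Lemma \ref{referee},
\begin{eqnarray*}
f(x,y)&=&\rho_1(x)D_1(y)+\rho_2(x)D_2^{0,0}(y)+\rho_3(x)D_3(y)+[\phi(x),y]\\
&=&\theta_1(y)D_1(x)+\theta_2(y)D_2^{0,0}(x)+\theta_3(y)D_3(x)+[x,\psi(y)],
\end{eqnarray*}
and collapse the terms in which the relevant $D_i$ annihilates the argument.

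For $(L_n,L_m)$, the $D_1$--terms vanish in both forms (since $D_1$ kills every $L$), giving (\ref{aa14}). For $(L_n,I_m)$ the first form retains only the $D_1$--term (since $D_2^{0,0}$ and $D_3$ kill every $I$), while the second form retains only the $D_2^{0,0}$-- and $D_3$--terms (since $D_1$ kills every $L$), yielding (\ref{aa15}). The case $(I_n,L_m)$ is symmetric and gives (\ref{aa16}). Finally, for $(I_n,I_m)$, only the $D_1$--terms survive in both forms, producing (\ref{aa17}).

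Since every step is pure substitution into the identities of Lemma \ref{referee} and Lemma \ref{fxc=0}, no genuine difficulty arises; the statement is really just a convenient bookkeeping rewrite that isolates, one basis-pair at a time, the data $(\rho_i,\theta_i,\phi,\psi)$ that will be pinned down in the subsequent lemmas via (\ref{aaa4})--(\ref{aaa8}).
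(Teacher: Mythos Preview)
Your proposal is correct and is exactly the approach the paper takes: the paper's entire proof is the single sentence ``It will follow by Lemmas \ref{fxc=0} and \ref{referee},'' which is precisely the substitution-and-simplify argument you describe.
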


\begin{proof}
It will follow by Lemmas \ref{fxc=0} and \ref{referee}.
\end{proof}

\begin{lemma}\label{fxc=3}
Let $f$ be a biderivation of $\widetilde{\mathcal{H}}$ and $\phi, \psi, \rho_i, \theta_i, i=1,2,3$ be given by Lemma \ref{referee}.
Then there are $\lambda \in\mathbb{C}$ and a finite set
 $\Omega=\{\mu_k\in \mathbb{C}| k\in \mathbb{Z}\}$ such that (\ref{aa10}), (\ref{aa11}) and
\begin{eqnarray}
\phi(L_n)=\lambda L_n +\sum_{k\in \mathbb{Z}\setminus{\{-n\}}} \frac{\mu_k}{n+k}I_{n+k}+t_{0}^{(n)}I_0,\label{aa18}\\
\psi(L_n)=\lambda L_n +\sum_{k\in \mathbb{Z}\setminus{\{-n\}}} \frac{-\mu_k}{n+k}I_{n+k}+g_{0}^{(n)}I_0,\label{aa19}
\end{eqnarray}
for all $n\in\mathbb{Z}$, where $t_0^{(n)}$ and $g_0^{(n)}$ are complex numbers related to $n$. And then, we have
\begin{eqnarray}
f(L_n, L_m)=\lambda [L_n,L_m] +r_\Omega(L_n,L_m) \label{aa1888}
\end{eqnarray}
where $r_\Omega(L_n,L_m)=\sum_{k\in \mathbb{Z}} \mu_k I_{m+n+k}$ is given by Definition \ref{taa}.
\end{lemma}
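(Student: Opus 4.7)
The plan is to expand $\phi(L_n)$ and $\psi(L_m)$ in the basis of $\widetilde{\mathcal{H}}$, compare the two representations of $f(L_n,L_m)$ supplied by (\ref{aa14}), and then invoke the preparatory Lemmas \ref{refe} and \ref{refer}; the outputs of those lemmas will deliver (\ref{aa10}), (\ref{aa11}), (\ref{aa18}), (\ref{aa19}) almost immediately.

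First I would write $\phi(L_n)=\sum_{i}k_{i}^{(n)}L_{i}+\sum_{i}t_{i}^{(n)}I_{i}$ and $\psi(L_m)=\sum_{j}h_{j}^{(m)}L_{j}+\sum_{j}g_{j}^{(m)}I_{j}$ as finite sums. Using the brackets $[L_{i},L_{m}]=(i-m)L_{i+m}$, $[I_{i},L_{m}]=iI_{i+m}$ and $[L_{n},I_{j}]=-jI_{n+j}$ in $\widetilde{\mathcal{H}}$, expand $[\phi(L_{n}),L_{m}]$ and $[L_{n},\psi(L_{m})]$, plug into (\ref{aa14}), and match the coefficients of $L_{s}$ on the two sides. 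After the reindexing $i=s-m$ this reads exactly $(i-m)k_{i}^{(n)}=(2n-m-i)h_{m-n+i}^{(m)}$, which is the hypothesis (\ref{aaa4}) of Lemma \ref{refe}. Applying that lemma yields $\lambda\in\mathbb{C}$ with $k_{i}^{(n)}=\delta_{n,i}\lambda$ and $h_{j}^{(m)}=\delta_{m,j}\lambda$, so that $\phi(L_n)=\lambda L_{n}+\sum_{i}t_{i}^{(n)}I_{i}$ and $\psi(L_m)=\lambda L_{m}+\sum_{i}g_{i}^{(m)}I_{i}$.

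Next I would match the $I_{s}$-coefficients on the two sides of (\ref{aa14}) using these simplified forms. With $i=s-m$ and restricting to $n\neq m$, the three mutually exclusive subcases $i\notin\{0,n-m\}$, $i=0$, and $i=n-m$ produce respectively the relations (\ref{aaa6}), (\ref{aaa7}), (\ref{aaa8}) that feed Lemma \ref{refer}. That lemma then produces the set $\Omega=\{\mu_{k}\}$ satisfying (\ref{aa10}), (\ref{aa11}) and $t_{n+k}^{(n)}=-g_{n+k}^{(n)}=\mu_{k}/(n+k)$ for $k\neq -n$; the index $i=0$ (that is, $k=-n$) contributes only through the $\delta_{i,0}$ term that was absorbed by $\rho_{2},\rho_{3}$, so $t_{0}^{(n)}$ and $g_{0}^{(n)}$ survive as undetermined scalars, yielding (\ref{aa18}) and (\ref{aa19}). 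The finiteness $|\Omega|<\infty$ is forced by $\phi(L_{n})\in\widetilde{\mathcal{H}}$ having finite support: $t_{i}^{(n)}=0$ for large $|i|$ gives $\mu_{k}=0$ for large $|k|$.

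Finally, substituting (\ref{aa18}) together with $\rho_{2}(L_{n})=-\mu_{-n}$ and $\rho_{3}(L_{n})=\mu_{-n}$ into (\ref{aa14}) gives (\ref{aa1888}): the free $t_{0}^{(n)}I_{0}$ contribution is killed by $[I_{0},L_{m}]=0$, while $[I_{n+k},L_{m}]=(n+k)I_{n+m+k}$ telescopes the surviving sum so that, together with the $\mu_{-n}I_{m}$ coming from the $\rho$-terms, one recovers $\lambda[L_{n},L_{m}]+\sum_{k\in\mathbb{Z}}\mu_{k}I_{m+n+k}$. I expect the main obstacle to be nothing conceptual but rather careful bookkeeping: keeping the signs straight between $[L_{n},I_{j}]$ and $[I_{j},L_{n}]$, and handling the coincidence $n=m$ where the indices $0$ and $n-m$ collapse (one verifies that the resulting single equation is automatically consistent with the $\rho_{i}$, $\theta_{i}$ values delivered by Lemma \ref{refer} from the $n\neq m$ cases).
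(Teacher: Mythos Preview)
Your proposal is correct and follows essentially the same approach as the paper's own proof: expand $\phi(L_n)$ and $\psi(L_m)$ in the basis, compare the two sides of (\ref{aa14}) to extract exactly the hypotheses (\ref{aaa4}) and (\ref{aaa6})--(\ref{aaa8}) of Lemmas \ref{refe} and \ref{refer}, apply those lemmas, and then substitute back to obtain (\ref{aa1888}). You even supply two small points the paper leaves implicit, namely the finiteness of $\Omega$ from the finite support of $\phi(L_n)$ and the bookkeeping for the degenerate case $n=m$.
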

\begin{proof} For any fixed $n\in\mathbb{Z}$, suppose that
\begin{eqnarray}
\phi (L_n)=\sum_{i\in \mathbb{Z}}k_i^{(n)}L_i+\sum_{i\in \mathbb{Z}}t_i^{(n)}I_i, \label{aa20} \\
\psi(L_n)=\sum_{j\in \mathbb{Z}}h_j^{(n)}L_j+\sum_{j\in \mathbb{Z}}g_j^{(n)}I_j \label{bbb1}
\end{eqnarray}
where $k_i^{(n)},t_i^{(n)},h_i^{(n)},g_i^{(n)}\in \mathbb{C}$ for every $i\in \mathbb{Z}$. With (\ref{aa20}) and (\ref{bbb1}), direct computations show that
\begin{eqnarray}
[\phi (L_n),L_m]=\sum_{i\in \mathbb{Z}}(i-m)k_i^{(n)}L_{i+m}+\sum_{i\in \mathbb{Z}}i t_i^{(n)}I_{i+m}, \label{bbb2}
\end{eqnarray}
\begin{eqnarray}
[L_n,\psi(L_m)]&=&\sum_{j\in \mathbb{Z}}h_j^{(m)}(n-j)L_{j+n}-\sum_{j\in \mathbb{Z}}j g_j^{(m)}I_{j+n} \nonumber\\
 &=& \sum_{i\in \mathbb{Z}}(2n-m-i)h_{i+m-n}^{(m)}L_{i+m}-\sum_{i\in \mathbb{Z}}(i+m-n)g_{i+m-n}^{(m)}I_{i+m}. \label{bbb3}
\end{eqnarray}
Note that (\ref{aa14}) together with (\ref{bbb2}), (\ref{bbb3}) tells us that all (\ref{aaa4}), (\ref{aaa6}), (\ref{aaa7}) and (\ref{aaa8}) hold.
 Therefore, by Lemmas \ref{refe} and \ref{refer}, we know that there are $\lambda \in\mathbb{C}$ and a finite set
 $\Omega=\{\mu_k\in \mathbb{C}| k\in \mathbb{Z}\}$ such that (\ref{aa10}), (\ref{aa11}), (\ref{aa18}) and (\ref{aa19}) are established. And then, by (\ref{aa14}),  (\ref{aa10}), (\ref{aa11}) and (\ref{aa18}) one can obtain that
 \begin{eqnarray*}
f(L_n,L_m)&=&-(m-1)\mu_{-n}I_m+m\mu_{-n}I_m+\lambda [L_n,L_m]+\sum_{k\in \mathbb{Z}\setminus{\{-n\}}} \frac{\mu_k}{n+k}[I_{n+k},L_m]\\
&=& \mu_{-n}I_m+\lambda [L_n,L_m]+ \sum_{k\in \mathbb{Z}\setminus{\{-n\}}} \mu_k I_{m+n+k}\\
&=&  \lambda [L_n,L_m]+ \sum_{k\in \mathbb{Z}} \mu_k I_{m+n+k},
\end{eqnarray*}
which gives (\ref{aa1888}). The proof is completed.
\end{proof}

\begin{lemma}\label{fxc=2}
Let $f$ be a biderivation of $\widetilde{\mathcal{H}}$,  $\lambda$ be given by Lemma \ref{fxc=3} and $\phi, \psi, \rho_i, \theta_i, i=1,2,3$ be given by Lemma \ref{referee}. Then
$$
\phi(I_n)=\lambda I_n +q_{0}^{(n)}I_0,\
\psi(I_n)=\lambda I_n +l_{0}^{(n)}I_0
$$
and
$\rho_1(L_n)=\theta_1(L_n)=\rho_i(I_n)=\theta_i(I_n)=0, i=1,2,3$,
for all $n\in\mathbb{Z}$ where $t_0^{(n)}$ and $q_{0}^{(n)}$ are complex numbers related to $n$.
\end{lemma}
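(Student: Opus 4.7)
The plan is to exploit the three identities (\ref{aa15}), (\ref{aa16}), (\ref{aa17}) from Lemma \ref{ref}, together with the explicit forms of $\phi(L_n)$ and $\psi(L_n)$ already secured in Lemma \ref{fxc=3}. I would first pin down $\psi(I_m)$ via (\ref{aa15}), then $\phi(I_n)$ via (\ref{aa16}), and finish with the mixed identity (\ref{aa17}).

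For Step 1, I would expand $\psi(I_m)=\sum_{j}c_{j}^{(m)}L_{j}+\sum_{j}d_{j}^{(m)}I_{j}$ and compute both sides of (\ref{aa15}). Substituting the formula of Lemma \ref{fxc=3} gives $[\phi(L_n),I_m]=-\lambda m I_{n+m}$, since $[I_{n+k},I_m]=0$ in $\widetilde{\mathcal{H}}$ kills all $I$-contributions from $\phi(L_n)$. On the other side, $[L_n,\psi(I_m)]=\sum_j(n-j)c_j^{(m)}L_{n+j}-\sum_j j\,d_j^{(m)}I_{n+j}$. Matching the $L_p$-coefficient, which is zero on the left, forces $c_j^{(m)}=0$ for all $j$, so $\psi(I_m)$ is an $I$-only sum. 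Matching the $I_p$-coefficient then produces an identity of the shape
\begin{equation*}
\rho_1(L_n)\delta_{p,m}-\lambda m\,\delta_{p,n+m}=\bigl((n-1)\theta_2(I_m)+n\theta_3(I_m)\bigr)\delta_{p,n}-(p-n)d_{p-n}^{(m)}.
\end{equation*}
Specialising $p=n$ and varying $n$ determines $\theta_2(I_m)=\theta_3(I_m)=0$ and $\rho_1(L_n)=0$; specialising $p\neq n$ forces $d_j^{(m)}=\lambda\delta_{j,m}$ for $j\neq 0$ while leaving $d_0^{(m)}$ free, which yields $\psi(I_m)=\lambda I_m+l_0^{(m)}I_0$.

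Step 2 is entirely analogous to Step 1 applied to (\ref{aa16}). Using $\psi(L_m)$ from Lemma \ref{fxc=3}, one has $[I_n,\psi(L_m)]=\lambda n\,I_{m+n}$; expanding $\phi(I_n)$ in the basis and repeating the coefficient comparison kills the $L$-component of $\phi(I_n)$, yields $\theta_1(L_m)=\rho_2(I_n)=\rho_3(I_n)=0$, and produces $\phi(I_n)=\lambda I_n+q_0^{(n)}I_0$. For Step 3 I plug the outputs of Steps 1 and 2 into (\ref{aa17}); both brackets $[\phi(I_n),I_m]$ and $[I_n,\psi(I_m)]$ vanish because $[I_i,I_j]=0$, leaving $\rho_1(I_n)I_m=\theta_1(I_m)I_n$. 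Choosing $m\neq n$ and using linear independence of $I_m$ and $I_n$ then gives $\rho_1(I_n)=\theta_1(I_m)=0$ for every $n,m$.

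The main obstacle is the book-keeping in Step 1 (and its Step 2 twin): the three Kronecker symbols $\delta_{p,m}$, $\delta_{p,n+m}$, $\delta_{p,n}$ collide whenever one of $m=0$, $n=0$, or $m=n$ holds, so these boundary cases must be checked individually to confirm that $\rho_1(L_0)$, $\theta_2(I_0)$, $\theta_3(I_0)$ (and their Step-2 analogues) really vanish rather than being absorbed into the free constants $d_0^{(m)}$ or $q_0^{(n)}$. Once those collisions are sorted out, the varying-parameter trick already used in Lemmas \ref{refe}--\ref{refere} makes the remaining coefficient comparisons entirely routine.
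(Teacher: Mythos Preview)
Your proposal is correct and follows essentially the same route as the paper: expand $\psi(I_m)$ and $\phi(I_n)$ in the $\{L_j,I_j\}$ basis, feed the known forms of $\phi(L_n)$, $\psi(L_m)$ from Lemma~\ref{fxc=3} into (\ref{aa15}) and (\ref{aa16}), compare coefficients to kill the $L$-part and pin down the $I$-part up to an $I_0$ term, then use (\ref{aa17}) to finish off $\rho_1(I_n)$ and $\theta_1(I_m)$. The only cosmetic difference is that the paper packages the ``vary the free index'' step by quoting Lemma~\ref{refere}, whereas you carry out that coefficient-varying argument directly (and flag the Kronecker-delta collisions explicitly, which the paper glosses over).
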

\begin{proof} For any fixed $n\in\mathbb{Z}$, we suppose that
\begin{eqnarray}
\phi (I_n)=\sum_{i\in \mathbb{Z}}p_i^{(n)}L_i+\sum_{i\in \mathbb{Z}}q_i^{(n)}I_i, \label{dd10}\\
\psi(I_n)=\sum_{j\in \mathbb{Z}}s_j^{(n)}L_j+\sum_{j\in \mathbb{Z}}l_j^{(n)}I_j \label{eee1}
\end{eqnarray}
where $p_i^{(n)},q_i^{(n)},s_j^{(n)},l_j^{(n)}\in \mathbb{C}$ for every $i\in \mathbb{Z}$.
By using (\ref{dd10}), (\ref{eee1}) and Lemma \ref{fxc=3} we have
\begin{eqnarray*}
[\phi (L_n),I_m]=-\lambda mI_{m+n},  \
%\end{eqnarray*}
%\begin{eqnarray*}
[L_n,\psi(I_m)]=\sum_{j\in \mathbb{Z}}(n-j)s_j^{(m)} L_{j+n}-\sum_{j\in \mathbb{Z}}j l_j^{(m)}I_{n+j}. %\label{eee3}
\end{eqnarray*}
This, together with (\ref{aa15}), yields that $l_m^{(m)}=\lambda$ and
\begin{eqnarray*}
&& (n-j)s_j^{(m)}=0, \forall m,n,j, \\
&& j l_j^{(m)}=0, \forall j\neq m, m-n,\\
&& (n-1) \theta_2(I_m)+n\theta_3(I_m)= \rho_1(L_n)=0, \forall m\neq n.
\end{eqnarray*}
From this, it follows by Lemma \ref{refere} that $s_i^{(m)}=l_j^{(m)}=\theta_2(I_m)=\theta_3(I_m)= \rho_1(L_m)=0$ for all $i,j,m\in \mathbb{Z}$
with $j\neq 0, m$. Similarly, by (\ref{aa16}) we deduce that $q_m^{(m)}=\lambda$ and  $p_i^{(m)}=q_j^{(m)}=\rho_2(I_m)=\rho_3(I_m)= \theta_1(L_m)=0$ for all $i,j,m\in \mathbb{Z}$ with $j\neq 0, m$.  Finally,  by (\ref{aa17}) we have that
$$
f(I_n,I_m)=\rho_1(I_n)I_m=\theta_1(I_m)I_n
$$
for all $m,n\in \mathbb{Z}$, which yields that $\rho_1(I_m)=\theta_1(I_m)=0$ for all $m\in \mathbb{Z}$. The proof is completed.
\end{proof}

\begin{lemma}\label{w00}
Let $f$ be a biderivation of $\widetilde{\mathcal{H}}$. Then there are $\lambda \in \mathbb{C}$ and a finite set $\Omega=\{\mu_k\in \mathbb{C}| k\in \mathbb{Z}\}$ such that $ f(x,y)=\lambda[x,y]+r_\Omega(x,y)$
for all $x,y\in \widetilde{\mathcal{H}}$, where $r_\Omega$ is given by Definition \ref{taa}.
\end{lemma}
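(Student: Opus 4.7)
The plan is to reduce to checking the claimed formula on pairs of basis vectors and then to assemble the ingredients already packaged in Lemmas \ref{referee}, \ref{fxc=3}, and \ref{fxc=2}. Since $\widetilde{\mathcal{H}}$ has basis $\{L_n,I_n\mid n\in\mathbb{Z}\}$ and both $f$ and $\lambda[\cdot,\cdot]+r_\Omega$ are bilinear, it suffices to match the two sides on the four types of pairs: $(L_n,L_m)$, $(L_n,I_m)$, $(I_n,L_m)$ and $(I_n,I_m)$. The constants $\lambda$ and the set $\Omega$ are precisely those supplied by Lemma \ref{fxc=3}.

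First I would dispose of the $(L,L)$ case, since it is already the content of formula (\ref{aa1888}) in Lemma \ref{fxc=3}. Next I would treat $f(L_n,I_m)$ via equation (\ref{aa15}): Lemma \ref{fxc=2} gives $\rho_1(L_n)=0$, and Lemma \ref{fxc=3} expresses $\phi(L_n)$ as $\lambda L_n$ plus a combination of $I$'s; therefore $[\phi(L_n),I_m]$ collapses to $\lambda[L_n,I_m]$ because $[I_k,I_m]=0$ in $\widetilde{\mathcal{H}}$. Since $r_\Omega(L_n,I_m)=0$ by Definition \ref{taa}, this matches $\lambda[L_n,I_m]+r_\Omega(L_n,I_m)$. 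The case $f(I_n,L_m)$ is symmetric: apply (\ref{aa16}) and use $\theta_1(L_m)=0$ from Lemma \ref{fxc=2} together with the formula for $\psi(L_m)$ from Lemma \ref{fxc=3}; the same cancellation via $[I,I]=0$ yields $\lambda[I_n,L_m]$. Finally, for $f(I_n,I_m)$ I would invoke (\ref{aa17}) with $\rho_1(I_n)=0$ (Lemma \ref{fxc=2}) and $\phi(I_n)=\lambda I_n+q_0^{(n)}I_0$, so $[\phi(I_n),I_m]=0$; since $[I_n,I_m]=0$ and $r_\Omega(I_n,I_m)=0$, the formula reduces to $0=0$.

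Assembling these four identities by bilinearity gives $f(x,y)=\lambda[x,y]+r_\Omega(x,y)$ for all $x,y\in\widetilde{\mathcal{H}}$. I do not expect any genuine obstacle in this step, because all the nontrivial analytic content—the vanishing of the various $\rho_i,\theta_i$, the explicit $\lambda L_n$ principal term in $\phi(L_n),\psi(L_n),\phi(I_n),\psi(I_n)$, and the identification of the coefficients $\mu_k$ with a common finite set $\Omega$—was already extracted in Lemmas \ref{fxc=3} and \ref{fxc=2}. The only mild care needed is to observe that the $I_0$-tails $t_0^{(n)} I_0$, $g_0^{(n)}I_0$, $q_0^{(n)}I_0$, $l_0^{(n)}I_0$ appearing in $\phi$ and $\psi$ are harmless: being in the kernel of $\mathrm{ad}$ on $\widetilde{\mathcal{H}}$ (since $I_0$ is central modulo $Z'(\mathcal{H})$ in $\mathcal{H}$, i.e., central in $\widetilde{\mathcal{H}}$), they contribute nothing to any bracket $[\phi(\cdot),\cdot]$ or $[\cdot,\psi(\cdot)]$ and thus do not affect the final biderivation $f$.
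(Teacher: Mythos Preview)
Your proposal is correct and follows essentially the same approach as the paper: verify the identity on the four types of basis pairs using (\ref{aa1888}) for $(L,L)$ and equations (\ref{aa15})--(\ref{aa17}) together with the vanishing results of Lemma \ref{fxc=2} and the explicit forms from Lemma \ref{fxc=3} for the remaining three cases, then extend by bilinearity. Your write-up is in fact more explicit than the paper's, which simply states the three identities for $f(L_n,I_m)$, $f(I_n,L_m)$, $f(I_n,I_m)$ as coming ``by a simple computation''; your remark about the harmless $I_0$-tails is a helpful clarification that the paper leaves implicit.
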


\begin{proof}
Notice Lemmas \ref{fxc=3}, \ref{fxc=2} and (\ref{aa15}),  (\ref{aa16}), (\ref{aa17}), by a simple computation we have
\begin{eqnarray*}
&& f(L_n,I_m)=\lambda[L_n,I_m]=\lambda[L_n,I_m]+r_\Omega(L_n,I_m), \\
&& f(I_n,L_m)=\lambda[I_n,L_m]=\lambda[I_n,L_m]+r_\Omega(I_n,L_m),\\
&& f(I_n,I_m)=0=\lambda[I_n,I_m]+r_\Omega(I_n,I_m),
\end{eqnarray*}
here we use the fact $r_\Omega(L_n,I_m)=r_\Omega(I_n,L_m)=r_\Omega(I_n,I_m)=0$ by Definition \ref{taa}. This, together with (\ref{aa1888}) and the bilinearity of $f$, completes the proof.
\end{proof}

{\bf The proof of Theorem \ref{posttheo}:}
The ``if" part is easy to verify, we now prove the ``only if" part.

Now we assume that $f$ is a biderivation of $\mathcal{H}$. Define a linear map $\pi: B(\mathcal{H})\rightarrow B(\mathcal{H}/Z^\prime (\mathcal{H}))$ defined by $\pi(f)(x+Z^\prime (\mathcal{H}),y+Z^\prime (\mathcal{H}))=f(x,y)+Z^\prime (\mathcal{H})$ where $Z^\prime (\mathcal{H})={\rm Span} \{C_1,C_2,C_3\}$.  Note that $\mathcal{H}/Z^\prime (\mathcal{H}) \cong \widetilde{\mathcal{H}}$. By Lemmas \ref{fxc=5} and \ref{w00}, we can assume that
$$f(x,y)=\lambda[x,y]+r_\Omega(x,y),\ \forall x,y\in {\rm Span} \{L_i, I_i| i\in \mathbb{Z}\}.$$
where $\lambda \in \mathbb{C}$ and $\Omega=\{\mu_k\in \mathbb{C}| k\in \mathbb{Z}\}$  is a finite set.
For any $x,y\in  {\rm Span} \{L_i, I_i| i\in \mathbb{Z}\}$ and $\alpha, \beta\in {\rm Span} \{C_1,C_2,C_3\}$, we have by Lemma \ref{fxc=4} that
$$f(x+\alpha,y+\beta)= f(x,y)=\lambda[x,y]+r_\Omega(x,y)=\lambda[x+\alpha,y+\beta]+r_\Omega(x+\alpha,y+\beta).$$
The proof is completed.

\section{Applications}\label{sec4}

In this section, we give some applications of biderivations of  the twisted Heisenberg-Virasoro algebra.

\subsection{Linear commuting maps on Lie algebras}

Recall that a linear commuting map $\phi$ on a Lie algebra $L$ subject to $[\phi(x),x]=0$ for any $x\in L$. The first important result on linear (or additive) commuting maps is Posner's theorem \cite{Pos} from 1957. Then many scholars study commuting maps on all kinds of algebra structures. Bre\v{s}ar \cite{Bre3} briefly discuss various extensions of the notion of a commuting map.  About the recent articles on commuting maps we can reference \cite{Bou,Bre2,Bre3,Chen2016,CWS,Hanw,WD1,WD2}.

Obviously, if $\phi$ on
$L$ is such a map, then $[\phi(x), y] = [x, \phi(y)]$ for any $x, y\in L$. Define by $f(x,y)=[\phi(x),y]=[x,\phi(y)]$,
then it is easy to check that $f$ is a biderivation of $L$.

\begin{theorem}
Any linear map $\phi$ on $\mathcal{H}$ is commuting if and only if there are $\lambda \in \mathbb{C}$ and a linear map $\tau : \mathcal{H}\rightarrow Z(\mathcal{H})$ such that  $\phi(x)=\lambda x+\tau (x)$ for all $x\in \mathcal{H}$.
\end{theorem}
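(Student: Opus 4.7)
The plan is to reduce the claim to Theorem \ref{posttheo} by associating to each commuting map $\phi$ the bilinear form $f(x,y)=[\phi(x),y]$. The ``if'' direction is immediate: for $\phi(x)=\lambda x+\tau(x)$ with $\tau(x)\in Z(\mathcal{H})$, one checks at once that $[\phi(x),x]=\lambda[x,x]+[\tau(x),x]=0$. So the substance lies in the ``only if'' direction.

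First I would verify that $f(x,y)=[\phi(x),y]$ is a biderivation of $\mathcal{H}$. In the first slot this is obvious, since $f(x,\cdot)=\mathrm{ad}\,\phi(x)$ is an inner derivation. In the second slot I would use the key consequence of the commuting condition, namely $[\phi(x),y]=[x,\phi(y)]$, to rewrite $f(x,y)=[x,\phi(y)]$ and then apply the Jacobi identity to $f(\,\cdot\,,y)$: this yields $f(x\circ z,y)=[[x,z],\phi(y)]=[x,[z,\phi(y)]]+[[x,\phi(y)],z]=[x,f(z,y)]+[f(x,y),z]$, as required by \eqref{2der}.

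Next I would invoke Theorem \ref{posttheo} to write $f(x,y)=\lambda[x,y]+r_\Omega(x,y)$ for some $\lambda\in\mathbb{C}$ and some finite set $\Omega=\{\mu_k\}$. The crucial observation is that the commuting hypothesis forces $f$ to be skew-symmetric: indeed $f(x,y)=[\phi(x),y]=[x,\phi(y)]=-[\phi(y),x]=-f(y,x)$. On the other hand, inspecting Definition \ref{taa}, the form $r_\Omega$ is symmetric, since $r_\Omega(L_m,L_n)=\sum_k\mu_kI_{m+n+k}=r_\Omega(L_n,L_m)$ and $r_\Omega$ vanishes on all other basis pairs. Comparing the skew- and symmetric parts of $f(x,y)+f(y,x)=2r_\Omega(x,y)=0$ evaluated on $(L_m,L_n)$ will then force every $\mu_k=0$, so $r_\Omega\equiv 0$.

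Once $r_\Omega=0$, the identity $[\phi(x),y]=\lambda[x,y]$ holds for all $x,y\in\mathcal{H}$, which rearranges to $[\phi(x)-\lambda x,\,y]=0$ for all $y$. Hence $\tau(x):=\phi(x)-\lambda x$ lies in $Z(\mathcal{H})$ for every $x$, and linearity of $\tau$ is inherited from $\phi$. I do not anticipate any serious obstacle; the only point requiring care is the killing of $r_\Omega$ via skew-symmetry, and this is precisely where the non-skew-symmetric biderivations detected in Theorem \ref{posttheo} get eliminated by the commuting hypothesis.
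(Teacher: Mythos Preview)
Your proposal is correct and follows essentially the same route as the paper: form the biderivation $f(x,y)=[\phi(x),y]$, apply Theorem~\ref{posttheo}, use skew-symmetry of $f$ together with the symmetry of $r_\Omega$ to force $r_\Omega=0$, and conclude $\phi(x)-\lambda x\in Z(\mathcal{H})$. The only difference is cosmetic: you spell out the verification that $f$ is a biderivation in both slots, whereas the paper relegates this to the remark preceding the theorem.
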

\begin{proof}
Notice that the ``if"  part is easy to verify. we now prove the ``only if" part.

By the above discussion, we see that if we let $f(x,y)=[\phi(x),y], \ x, y \in \mathcal{H}$,
 then $f$ is a biderivation of $L$. From Theorem \ref{posttheo}, we have
\begin{eqnarray}
f(x,y)=[\phi(x),y]=\lambda[x,y]+ r_\Omega(x,y). \label{hhh9}
\end{eqnarray}
since $f(x,x)=0$, so $f(x,y)=-f(y,x)$. This implies that $\lambda[x,y]+r_\Omega(x,y)=-\lambda[y,x]-r_\Omega(y,x)$.
Therefore, we see that $r_\Omega(x,y)=r_\Omega(y,x)=-r_\Omega(y,x)$ for all $x,y \in \mathcal{H}$. Thus, $r_\Omega(x,y)=0$.
From this with (\ref{hhh9}), we deduce that $[\phi(x)-\lambda(x), y]=0$ for all $x,y \in \mathcal{H}$.  In other words,
$\phi(x)-\lambda(x) \in Z(\mathcal{H})$.  Hence we can find a linear map $\tau : \mathcal{H}\rightarrow Z(\mathcal{H})$ such that  $\phi(x)=\lambda x+\tau (x)$ for all $x\in \mathcal{H}$.  The proof is completed.
\end{proof}

\subsection{Post-Lie algebra}

Post-Lie algebras have been introduced by Valette in connection with the homology of partition
posets and the study of Koszul operads \cite{vela}. As \cite{Burde1} pointed out, post-Lie algebras are natural common generalization of pre-Lie algebras  and LR-algebras in the geometric context of nil-affine actions of Lie groups. Recently, many authors study some post-Lie algebras and post-Lie algebra structures  \cite{Burde2,Burde1,Mun,pan,tang2014}. In particular, the authors \cite{Burde1} study the commutative post-Lie algebra structure on Lie algebra.  By using our results, we can characterize the  commutative post-Lie algebra structure on $\mathcal{H}$.
Let us recall the following definition of a commutative post-Lie algebra.

\begin{definition}\label{post} \cite{tang2017}
Let $(L, [, ])$ be a complex Lie algebra. A commutative post-Lie algebra structure on $L$ is a
$\mathbb{C}$-bilinear product $x\cdot y$ on $L$ satisfying the following identities:
\begin{eqnarray}
&& x \cdot y = y\cdot x, \nonumber\\
&& [x, y] \cdot z =  x \cdot (y \cdot z)-y \cdot (x \cdot z), \label{post6}\\
&& x\cdot [y, z] = [x\cdot y, z] + [y, x \cdot z] \nonumber
\end{eqnarray}
for all $x, y, z \in L$. We also say $(L, [, ], \cdot)$ a commutative post-Lie algebra.
\end{definition}

A post-Lie algebra $(L, [, ], \cdot)$ is said to be trivial if $x\cdot y=0$ for all $x,y\in L$.
The following lemma shows the connection between commutative post-Lie algebra and biderivation of a Lie algebra.

\begin{lemma}\label{postbide} \cite{tang2017}
Suppose that $(L, [, ], \cdot)$ is a commutative post-Lie algebra. If we define a bilinear map $f : L\times L \rightarrow L$ by $f(x,y)=x\cdot y$ for all $x,y\in L$, then $f$ is a biderivation of $L$.
\end{lemma}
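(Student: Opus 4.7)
The plan is to verify directly that the two biderivation identities \eqref{2der} and \eqref{1der} follow from the three axioms in Definition \ref{post}. The second axiom in Definition \ref{post}, namely $x\cdot[y,z]=[x\cdot y,z]+[y,x\cdot z]$, is already exactly the identity \eqref{1der} once we rewrite $x\cdot y$ as $f(x,y)$ and set $\circ=[\,,\,]$. So the identity \eqref{1der} is immediate, and no work is required for that half.

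The remaining task is to derive \eqref{2der}, i.e., $f([x,y],z)=[x,f(y,z)]+[f(x,z),y]$. Here I would exploit commutativity together with the same third axiom. Specifically, by commutativity of the product, $f([x,y],z)=[x,y]\cdot z=z\cdot[x,y]$. Applying the identity $w\cdot[u,v]=[w\cdot u,v]+[u,w\cdot v]$ with $w=z$, $u=x$, $v=y$ gives $z\cdot[x,y]=[z\cdot x,y]+[x,z\cdot y]$. Finally, commutativity converts $z\cdot x=x\cdot z=f(x,z)$ and $z\cdot y=y\cdot z=f(y,z)$, yielding $f([x,y],z)=[f(x,z),y]+[x,f(y,z)]$, which is precisely \eqref{2der}.

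I would organize the write-up as two short displayed computations, one for each derivation identity. The second axiom of Definition \ref{post} (the one equating $[x,y]\cdot z$ to $x\cdot(y\cdot z)-y\cdot(x\cdot z)$) is not needed for the biderivation property itself; it only enforces the compatibility of $\cdot$ with the Jacobi identity and can be safely ignored here. There is no real obstacle: the only mildly delicate point is that the ``left'' derivation identity is obtained from the ``right'' one via commutativity, so one must be careful to invoke the first axiom of Definition \ref{post} (symmetry of $\cdot$) at both ends of the chain. Once that is observed, the proof is a two-line verification.
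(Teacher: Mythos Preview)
Your proof is correct: identity \eqref{1der} is literally the third axiom of Definition~\ref{post}, and identity \eqref{2der} follows from the third axiom after applying commutativity on both sides, exactly as you show. Note that the paper does not supply its own proof of this lemma---it is quoted from \cite{tang2017}---so there is nothing to compare your argument against here. One small slip: in your first paragraph you call $x\cdot[y,z]=[x\cdot y,z]+[y,x\cdot z]$ the ``second axiom,'' but in Definition~\ref{post} it is the third displayed identity; you use the correct numbering later when you remark that the genuine second axiom (the one labeled \eqref{post6}) is not needed.
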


\begin{theorem}\label{refreesttt}
Any commutative post-Lie algebra structure on the twisted Heisenberg-Virasoro algebra $\mathcal{H}$ is trivial.
\end{theorem}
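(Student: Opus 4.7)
Suppose $(\mathcal{H},[,],\cdot)$ is a commutative post-Lie algebra structure. Set $f(x,y)=x\cdot y$; by Lemma \ref{postbide}, $f$ is a biderivation of $\mathcal{H}$, so by Theorem \ref{posttheo} there exist $\lambda\in\mathbb{C}$ and a finite set $\Omega=\{\mu_k\in\mathbb{C}\mid k\in\mathbb{Z}\}$ with
\[
f(x,y)=\lambda[x,y]+r_\Omega(x,y)\qquad \text{for all } x,y\in\mathcal{H}.
\]
The plan is to force $\lambda=0$ using the commutativity axiom, and then to force every $\mu_k=0$ using the remaining post-Lie axiom applied to triples of $L_m$'s.

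First, commutativity $f(x,y)=f(y,x)$ combined with the antisymmetry of $[,]$ and the manifest symmetry of $r_\Omega$ (visible from (\ref{fff3}): swapping $m,n$ leaves $I_{m+n+k}$ unchanged, and $r_\Omega$ vanishes when either argument is an $I_j$ or a $C_i$) yields
\[
\lambda[x,y]+r_\Omega(x,y)=-\lambda[x,y]+r_\Omega(x,y),
\]
so $2\lambda[x,y]=0$. Since $\mathcal{H}$ is not abelian, $\lambda=0$ and hence $f=r_\Omega$.

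Next, I would apply the middle identity (\ref{post6}) in Definition \ref{post}, namely $f([x,y],z)=f(x,f(y,z))-f(y,f(x,z))$, on the triple $x=L_m$, $y=L_n$, $z=L_p$. The left-hand side equals $(m-n)r_\Omega(L_{m+n},L_p)=(m-n)\sum_{k\in\mathbb{Z}}\mu_k I_{m+n+p+k}$. On the right-hand side both inner applications of $f=r_\Omega$ already produce elements in the $I$-part, and $r_\Omega$ vanishes whenever one of its arguments is an $I_j$; so the right-hand side is $0$. Thus
\[
(m-n)\sum_{k\in\mathbb{Z}}\mu_k I_{m+n+p+k}=0\qquad\forall\, m,n,p\in\mathbb{Z}.
\]
Choosing $m\neq n$ and letting $p$ vary (or equivalently using the linear independence of the $I_j$'s) forces $\mu_k=0$ for every $k\in\mathbb{Z}$, so $r_\Omega\equiv 0$ and therefore $x\cdot y=f(x,y)=0$.

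The routine part is the bilinear-extension step (once $f$ vanishes on the generators $L_m,I_m$ and kills the center by Lemma \ref{fxc=4}, it vanishes everywhere), so no real obstacle arises; the ``main step'' is really just the strategic choice of test triple $(L_m,L_n,L_p)$, exploiting that $r_\Omega$ lands inside the abelian ideal $\mathrm{Span}\{I_n\}$ on which $r_\Omega$ itself is identically zero. This collapse is what makes the commutative post-Lie structure trivial.
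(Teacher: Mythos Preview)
Your proof is correct and follows essentially the same approach as the paper's: both apply Theorem \ref{posttheo} via Lemma \ref{postbide}, use commutativity together with the symmetry of $r_\Omega$ to kill $\lambda$, and then plug a triple of $L$'s into the identity (\ref{post6}) (the paper uses the specific choice $(L_2,L_1,L_3)$ where you keep $(L_m,L_n,L_p)$ general) to force every $\mu_k=0$. Your final paragraph about a ``bilinear-extension step'' is unnecessary---Theorem \ref{posttheo} already gives the form of $f$ on all of $\mathcal{H}$, so once $\lambda=0$ and $\Omega=\{0\}$ you are done---but this does not affect correctness.
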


\begin{proof}
Suppose that $(\mathcal{H}, [, ], \cdot)$ is a commutative post-Lie algebra. By Lemma \ref{postbide} and Theorem  \ref{posttheo}, we know that there are $\lambda\in \mathbb{C}$ and a finite set $\Omega=\{\mu_i\in \mathbb{C}|i\in \mathbb{Z}\}$ such that $x\cdot y=\lambda [x,y]+r_\Omega(x,y)$ for all $x,y\in \mathcal{H}$, where $r_\Omega$ is given by Definition \ref{taa}. Because the product $\cdot$ is commutative, we have $\lambda [L_1,L_2]+\chi_\Omega(L_1,L_2)=\lambda [L_2,L_1]+\chi_\Omega(L_2,L_1)$, which implies $\lambda=0$.  By (\ref{post6}), we see that
$$
[L_2,L_1]\cdot L_3=L_2\cdot(L_1\cdot L_3)-L_1\cdot(L_2\cdot L_3).
$$
If there is $\mu_k\in \Omega$ such that $\mu_k\neq 0$, then it is easy to see that the left-hand side of the above equation contains an item $\mu_kI_{6+k}\neq 0$, whereas the right-hand side is equal to zero, which is a contradiction. Thus, we have $\Omega=\{0\}$, i.e., $\mu_i=0$ for any $i\in \mathbb{Z}$. In other words, $r_\Omega=0$. That is, $x\cdot y=0$ for all $x,y\in \mathcal{H}$.
\end{proof}

\subsection{Biderivation of left-symmetric algebras}

Recall that a left-symmetric algebra \cite{Bai,Burde3}  is an algebra $(A,\circ)$ such that $$(x\circ y)\circ z-x\circ (y\circ z)=(y\circ x)\circ z-y\circ (x\circ z)$$
 for all $x,y,z\in A$. Note that an associative algebra is a left-symmetric algebra. A good ting is that a Left-symmetric algebra is Lie admissible, i.e., the commutator $[x, y] = x \circ y - y \circ x$ defines a Lie algebra  $\mathcal{G}(A)$, which is called the sub-adjacent Lie algebra of $A$, and conversely $A$ is called a compatible left-symmetric algebra structure on $\mathcal{G}(A)$. The authors \cite{Chenhj} gave a class of left-symmetric algebra
 structures $A$ on the twisted Heisenberg-Virasoro algebra $\mathcal{H}=\mathcal{G}(A)$ which here is called a graded twisted Heisenberg-Virasoro left-symmetric algebra  as follows.

 \begin{definition}\label{graded}
 A graded twisted Heisenberg-Virasoro left-symmetric algebra is an algebra $(\mathcal{A}, \circ)$ with
 $\mathbb{C}$-basis $\{L_n, I_n, C_1,C_2, C_3| n\in \mathbb{Z} \}$
 such that
 \begin{eqnarray*}
&& L_m\circ L_n=\frac{-n(1+\epsilon n)}{1+\epsilon (m+n)}L_{m+n}+\frac{C_1}{24}(m^3-m+(\epsilon-\epsilon^{-1})m^2)\delta_{m,-n},\\
&& L_m\circ I_n=-n(1+(1-\epsilon n)\alpha \delta_{m,-n})I_{m+n}+ (m^2-m+(\epsilon m^2+m)\beta)C_2 \delta_{m,-n},
 \end{eqnarray*}
\begin{eqnarray*}
&& I_m\circ L_n=n(1+\epsilon n)\alpha \delta_{m,-n})I_{m+n}+ n(1+\epsilon n)\beta)C_2 \delta_{m,-n},\\
&& I_m\circ I_n=\frac{n C_3}{2} \delta_{m,-n},\  C_i\circ C_j=C_i\circ L_m=L_m \circ C_i=C_i\circ I_m=I_m \circ C_i=0
 \end{eqnarray*}
for all $m,n,i,j\in \mathbb{Z}$, where $\alpha, \beta, \epsilon\in \mathbb{C}$ such that
 ${\rm{Re}} \epsilon >0, \epsilon^{-1}\notin \mathbb{Z}$ or ${\rm{Re}} \epsilon=0,  {\rm Im} \epsilon >0.$
 \end{definition}

 The following lemma is easy to verify by a direct computation.

 \begin{lemma}\label{preder}
 Let $(A, \circ)$ be a left-symmetric algebra and $(L, [,])$ be the sub-adjacent Lie algebra of $A$. Then every derivation (resp. biderivation) of $A$ is also a derivation (resp. biderivation) of $L$.
 \end{lemma}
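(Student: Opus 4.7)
The plan is to unwind the definition of the sub-adjacent Lie bracket $[x,y]=x\circ y - y\circ x$ and reduce everything to bilinearity together with the Leibniz rule for $\circ$. The key point is that the derivation property for $\circ$ is additive in the two summands of the commutator, so it survives taking the antisymmetrization.

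First I would handle the derivation case. Let $\phi$ be a derivation of $(A,\circ)$. For any $x,y\in A$, I would compute
\begin{eqnarray*}
\phi([x,y]) &=& \phi(x\circ y)-\phi(y\circ x) \\
&=& \phi(x)\circ y + x\circ \phi(y) - \phi(y)\circ x - y\circ \phi(x) \\
&=& [\phi(x),y]+[x,\phi(y)],
\end{eqnarray*}
using linearity of $\phi$, the Leibniz rule for $\circ$, and regrouping the four terms into two Lie brackets. This shows $\phi\in\mathrm{Der}(L)$.

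Next I would bootstrap to biderivations. Suppose $f:A\times A\to A$ is a biderivation of $A$. For a fixed $z\in A$, the partial map $\phi_z=f(\cdot,z):A\to A$ is by hypothesis a derivation of $(A,\circ)$; applying the derivation case I just established, $\phi_z$ is also a derivation of $(L,[,])$, which is exactly equation (\ref{2der}) for $f$ regarded as a map into $L$. Symmetrically, $\psi_z=f(z,\cdot)$ is a derivation of $A$ hence of $L$, giving (\ref{1der}). Therefore $f$ is a biderivation of $L$.

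There is no real obstacle here; the statement is essentially a formal consequence of the fact that the Lie-admissibility identity is built from the Leibniz rule in each factor. The only thing to be a little careful about is the ``resp.'' part, where one must remember that being a biderivation is a two-sided condition and that both partial derivation conditions must be transferred from $\circ$ to $[,]$ separately — but both follow from the derivation case applied one variable at a time.
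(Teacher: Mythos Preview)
Your proof is correct and is precisely the direct computation the paper has in mind; the paper itself omits the proof, stating only that the lemma ``is easy to verify by a direct computation.'' Your argument---applying linearity and the Leibniz rule to the two summands of $[x,y]=x\circ y-y\circ x$, then handling the biderivation case by freezing one variable at a time---is exactly that verification.
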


 \begin{theorem}\label{left}
 Any biderivation $f$ of the graded twisted Heisenberg-Virasoro left-symmetric algebra $\mathcal{A}$ is trivial, i.e., $f=0$.
 \end{theorem}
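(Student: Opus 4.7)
The plan is to combine Lemma \ref{preder} with Theorem \ref{posttheo} to pin down the general form of $f$, and then exploit one carefully chosen instance of the left-symmetric biderivation identity to force every parameter to vanish.

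First, by Lemma \ref{preder} every biderivation of the left-symmetric algebra $\mathcal{A}$ is automatically a biderivation of its sub-adjacent Lie algebra $\mathcal{G}(\mathcal{A})=\mathcal{H}$. Applying Theorem \ref{posttheo} then produces $\lambda\in\mathbb{C}$ and a finite set $\Omega=\{\mu_k\in\mathbb{C}\mid k\in\mathbb{Z}\}$ with
$$f(x,y)=\lambda[x,y]+r_\Omega(x,y)$$
for all $x,y\in\mathcal{A}$. It remains to show $\lambda=0$ and $\mu_k=0$ for every $k$.

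To do that, I would specialize the left-slot biderivation axiom
$$f(x\circ y,z)=x\circ f(y,z)+f(x,z)\circ y$$
at $x=y=L_0$ and $z=L_n$ for arbitrary $n\in\mathbb{Z}$. From Definition \ref{graded} one checks $L_0\circ L_0=0$, so the left-hand side vanishes. Meanwhile Theorem \ref{posttheo}, together with $[L_0,L_n]=-nL_n$, gives $f(L_0,L_n)=-\lambda n L_n+\sum_{k}\mu_k I_{n+k}$. A direct inspection of Definition \ref{graded} also yields $L_0\circ L_n=-nL_n$, $L_n\circ L_0=0$, $L_0\circ I_{n+k}=-(n+k)I_{n+k}$ and $I_{n+k}\circ L_0=0$, with the $\delta$-driven central contributions disappearing in every case thanks to the $n=0$ or $m=0$ prefactors. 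Substituting, the right-hand side collapses to $\lambda n^2 L_n-\sum_{k}(n+k)\mu_k I_{n+k}$, which must equal zero.

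Since $\{L_n\}\cup\{I_{n+k}\mid k\in\mathbb{Z}\}$ is linearly independent, letting $n$ range over $\mathbb{Z}$ forces $\lambda n^2=0$ for every $n$, hence $\lambda=0$; and $(n+k)\mu_k=0$ for every $n,k$, which yields $\mu_k=0$ upon choosing any $n\neq-k$. Thus $f$ vanishes on every pair from $\mathrm{Span}\{L_m,I_m\mid m\in\mathbb{Z}\}$, and Lemma \ref{fxc=4} (applicable because $\mathcal{H}$ is perfect) extends this to all of $\mathcal{A}$. The main obstacle is simply to find a specialization in which the intricate $\epsilon,\alpha,\beta$-dependent coefficients of Definition \ref{graded} collapse to something tractable; the choice $x=y=L_0$ is attractive precisely because $L_0\circ L_0=0$ annihilates the left-hand side while multiplying by $L_0$ on either side automatically kills every central contribution from the left-symmetric product, leaving an equation whose only solution is $f\equiv 0$.
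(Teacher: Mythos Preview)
Your argument is correct and follows the same outline as the paper's: reduce to a Lie-algebra biderivation via Lemma~\ref{preder}, invoke Theorem~\ref{posttheo}, then check that $\lambda$ and every $\mu_k$ must vanish. The paper merely asserts that last step (``it is not difficult to verify by Definition~\ref{graded}''), whereas you supply a clean explicit specialization; one small redundancy is your closing appeal to Lemma~\ref{fxc=4}, which is unnecessary since Theorem~\ref{posttheo} already gives $f=\lambda[\,\cdot\,,\,\cdot\,]+r_\Omega$ on all of $\mathcal{H}=\mathcal{A}$, so $f\equiv 0$ follows immediately once the parameters are shown to be zero.
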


\begin{proof}
Note that the twisted Heisenberg-Virasoro algebra $\mathcal{H}$ is the sub-adjacent Lie algebra of the left-symmetric algebra $\mathcal{A}$ given by Definition \ref{graded}.  Let $f$ be any biderivation of the graded twisted Heisenberg-Virasoro left-symmetric algebra $\mathcal{A}$. Then by Lemma \ref{preder} we see that $f$ is also a biderivation of $\mathcal{H}$. From Theorem  \ref{posttheo}, there are $\lambda\in \mathbb{C}$ and a finite set $\Omega=\{\mu_i\in \mathbb{C}|i\in \mathbb{Z}\}$ such that $f(x, y)=\lambda [x,y]+r_\Omega(x,y)$ for all $x,y\in \mathcal{A}=\mathcal{H}$.
It is not difficult to verify by Definition \ref{graded} that $\lambda=0$ and  $\Omega=0$. Therefore, $f(x,y)=0$ for all $x,y\in \mathcal{A}$.
\end{proof}

\begin{remark}
 As far as we know, there is no any result on biderivation of left-symmetric algebras unless associative algebras. Theorem \ref{left} tells us that any biderivation of the graded twisted Heisenberg-Virasoro left-symmetric algebra is trivial. Is there a non-graded twisted Heisenberg-Virasoro left-symmetric algebra which has
  a non-trivial biderivation? This problem will be interesting.
\end{remark}

\section*{ Acknowledgements}

This work was supported in part by the National Natural Science Foundation of China [grant number 11171294], the Natural Science
Foundation of Heilongjiang Province of China [grant number A2015007], the fund of the Heilongjiang Education Committee [grant number 12531483],
and the special fund of Heilongjiang University of the Fundamental Research Funds for Universities in Heilongjiang province (grant number HDJCCX-2016211).


\begin{thebibliography}{40}
\bibitem{Bai} Bai C. Left-symmetric algebras from linear functions[J]. Journal of algebra, 2004, 281(2): 651-665.
\bibitem{Billig} Billig Y. Representations of the twisted Heisenberg-Virasoro algebra at level zero[J]. Canadian Mathematical Bulletin, 46 (2003), no. 4, 529¨C533.
\bibitem{Bou} Bounds J. Commuting maps over the ring of strictly upper triangular matrices. 2016, Linear Algebra and its Applicaitons 2016 507: 132-136.
\bibitem{Bre1995} Bre\v{s}ar M. On generalized biderivations and related maps. Journal of Algebra, 1995  172(3): 764-786.
\bibitem{Bre2} Bre\v{s}ar M. Miers C. R.  Commuting maps on Lie ideals. Communication in Algebra, 1995 23(14): 5539-5553.
\bibitem{Bre3} Bre\v{s}ar M. Commuting maps: a survey[J]. Taiwanese Journal of Mathematics, 2004, 8(3): pp. 361-397.
\bibitem{Burde2} Burde D, Dekimpe K, Vercammen K. Affine actions on Lie groups and post-Lie algebra structures[J]. Linear Algebra and its Applications, 2012, 437(5): 1250-1263.
\bibitem{Burde1} Burde D, Moens W A. Commutative post-Lie algebra structures on Lie algebras[J]. Journal of algebra, 2016, 467: 183-201.
\bibitem{Burde3} Burde D. Left-symmetric algebras, or pre-Lie algebras in geometry and physics[J]. Central European Journal of Mathematics, 2006, 4(3): 323-357.
\bibitem{Chen2016} Chen Z. Biderivations and linear commuting maps on simple generalized Witt algebras over a field[J]. Electronic Journal of Linear Algebra, 2016, 31(1): 1-12.
\bibitem{Chenhj} Chen H, Li J. Left-symmetric algebra structures on the twisted Heisenberg-Virasoro algebra[J]. Science China Mathematics, 2014, 57(3): 469-476.
\bibitem{Chenhj2} Chen H, Guo X. New simple modules for the Heisenberg-Virasoro algebra[J]. Journal of Algebra, 2013, 390: 77-86.
\bibitem{c2017} Cheng X, Wang M, Sun J,  Zhang, H. Biderivations and linear commuting maps on the Lie algebra $gca$, Linear Multilinear Algebra 2017, DOI: 10.1080/03081087.2016.1277688
\bibitem{CWS} Cheung W. Commuting maps of triangular algebras[J]. Journal of the London Mathematical Society, 2001, 63(1): 117-127.
\bibitem{Du2013} Du Y, Wang Y. Biderivations of generalized matrix algebras[J]. Linear Algebra and its Applications, 2013, 438(11): 4483-4499.
\bibitem{fan2016} Fan G, Dai X. Super-biderivations of Lie superalgebras[J]. Linear and Multilinear Algebra, 2016: 1-9.
\bibitem{Shoulan} Gao S, Jiang C, Pei Y. Low-dimensional cohomology groups of the Lie algebras $W (a, b)$[J]. Communications in Algebra, 2011, 39(2): 397-423.
\bibitem{Gho2013} Ghosseiri N M. On biderivations of upper triangular matrix rings[J]. Linear Algebra and its Applications, 2013, 438(1): 250-260.
\bibitem{Hanw} Han X, Wang D, Xia C. Linear commuting maps and biderivations on the Lie algebras $W (a, b)$[J]. Journal of Lie theory, 2016, 26(3): 777-786.
\bibitem{Mun} Munthe-Kaas H Z, Lundervold A. On post-Lie algebras, Lie-Butcher series and moving frames[J]. Foundations of Computational Mathematics, 2013, 13(4): 583-613.
\bibitem{pan} Pan Y, Liu Q, Bai C, et al. PostLie algebra structures on the Lie algebra $sl(2, \mathbb{C})$[J].  Electronic Journal of Linear Algebra, 2012, 23(1): 23,180-197.
\bibitem{Pos} Posner E C. Derivations in prime rings. P. Am. Math. Soc. 1957, 8(6): 1093-1100.
\bibitem{Shenr} Shen R, Jiang C. The derivation algebra and automorphism group of the twisted Heisenberg-Virasoro algebra[J]. Communications in Algebra, 2006, 34(7): 2547-2558.
\bibitem{tang2016} Tang X.  Biderivations of finite-dimensional complex simple Lie algebras. Linear and Multilinear Algebra, 2017, DOI: 10.1080/03081087.2017.1295433.
\bibitem{tang2017} Tang X. Biderivations, linear commuting maps and commutative post-Lie algebra structures on W-algebras, Communications in Algebra, 2017, dio:10.1080/00927872.2017.1302456
\bibitem{tang2014} Tang X, Zhang Y. Post-Lie algebra structures on solvable Lie algebra $t (2,\mathbb{ C})$[J]. Linear Algebra and its Applications, 2014, 462: 59-87.
\bibitem{vela} Vallette B. Homology of generalized partition posets[J]. Journal of Pure and Applied Algebra, 2007, 208(2): 699-725.
\bibitem{Wangq} Guo H, Wang Q. Twisted Heisenberg-Virasoro vertex operator algebra[J]. arXiv preprint arXiv:1612.06991, 2016.
\bibitem{WD1} Wang D, Yu X. Biderivations and linear commuting maps on the Schr\"{o}dinger-Virasoro Lie algebra[J]. Communications in Algebra, 2013, 41(6): 2166-2173.
\bibitem{WD3} Wang D, Yu X, Chen Z. Biderivations of the parabolic subalgebras of simple Lie algebras[J]. Communications in Algebra, 2011, 39(11): 4097-4104.
\bibitem{WD2} Xia C, Wang D, Han X. Linear super-commuting maps and super-biderivations on the super-Virasoro algebras[J]. Communications in Algebra, 2016, 44(12): 5342-5350.
\end{thebibliography}
\end{document}